\definecolor{labelkey}{gray}{.8}
\definecolor{refkey}{gray}{.8}
\definecolor{darkred}{rgb}{0.9,0.1,0.1}
\definecolor{darkgreen}{rgb}{0,0.5,0}
\newtheorem{theorem}{Theorem}[section]
\newtheorem{lemma}[theorem]{Lemma}
\newtheorem{definition}[theorem]{Definition}
\newtheorem{proposition}[theorem]{Proposition}
\theoremstyle{remark}
\renewenvironment{proof}[1][Proof]{ {\itshape \noindent {#1.}} }{$\Box$
\medskip}
\numberwithin{equation}{section}
\newcommand{\R}{\mathbb{R}}
\newcommand{\Z}{\mathbb{Z}}
\newcommand{\N}{\mathbb{N}}
\newcommand{\Pb}{\mathbb{P}}
\newcommand{\PP}{\mathbf{P}}
\newcommand{\E}{\mathbb{E}}
\newcommand{\F}{\mathcal{F}}
\newcommand{\bbR}{\mathbb{R}}
\newcommand{\G}{\mathcal{G}}
\newcommand{\si}{\sigma}
\newcommand{\EE}{\mathbf{E}}
\newcommand{\1}{\mathbbm{1}}
\newcommand{\cov}{\mathrm{Cov}}
\newcommand{\bP}{\mathbb{P}}
\newcommand{\bT}{\mathbb{T}}
\newcommand{\x}{\mathbf{x}}
\newcommand{\cal}{\mathcal}
\newcommand{\vrho}{\varrho}
\newcommand{\lf}{\lfloor}
\newcommand{\rf}{\rfloor}
\newcommand{\blue}[1]{{\color{black}#1}}
\begin{document}

\title[Winding number of polymer on cylinder]{Fluctuations of the Winding number of a directed polymer on a cylinder}
\author{Yu Gu and Tomasz Komorowski}

\address[Yu Gu]{Department of Mathematics, University of Maryland, College Park, MD 20742, USA}

\address[Tomasz Komorowski]{Institute of Mathematics, Polish Academy
  of Sciences, ul. \'{S}niadeckich 8, 00-656, Warsaw, Poland}

\maketitle

\begin{abstract}
We prove a central limit theorem for the winding number of a directed polymer on a cylinder, which is equivalent with proving the Gaussian fluctuations of the endpoint of the directed polymer in a spatial periodic environment.

\bigskip



\noindent \textsc{Keywords:} directed polymer, central limit theorem, homogenization.

\end{abstract}
\maketitle

\section{Introduction}

\subsection{Main result}

We consider the problem of a directed polymer on a cylinder and study
the fluctuations of the winding number, that is, the algebraic number
of turns the polymer path does around the cylinder. The problem is
equivalent \blue{to} studying \blue{the fluctuations of the  endpoint  of a directed polymer in a random  periodic  environment}. To state the main result, we first give an informal description of the model. The reference path measure is chosen to be the Wiener measure, and the random environment is modeled by a Gaussian space-time white noise $\xi(t,x)$ on $\R_+\times [0,1]$, with periodic boundary condition, and we periodically extend it to $\R_+\times \R$. 

For each realization of the random environment, the partition function of the directed polymer is given by 
\begin{equation}\label{e.defZ}
Z_T=\E \exp(\beta \int_0^T\xi(t,w_t)dt),
\end{equation}
where $\{w_t\}_{t\geq0}$ is a one-dimensional standard Brownian motion
starting from the origin, independent of $\xi$, and $\E$ is the
expectation \blue{over the realizations of the Brownian motion}  $w$. Here $\beta>0$ is a fixed parameter playing the role of the inverse temperature. Since $\xi$ is a space-time white noise, the above expression should be interpreted carefully, see Section~\ref{s.she} below for more details.

 The quenched  density of the polymer endpoint $w_T$
 is then given by 
\begin{equation}\label{e.defrho}
\rho(T,x)=\frac{\E \exp(\beta \int_0^T\xi(t,w_t)dt) \delta(w_T-x)}{\E \exp(\beta \int_0^T\xi(t,w_t)dt)}.
\end{equation}
Since the random environment $\xi$ is periodic in space, another perspective is to view the polymer path as lying on a cylinder, in which case it is the trajectory of $\{w_t-\lf w_t\rf\}_{t\geq 0}$ we are tracking. The winding number of the polymer path around the cylinder, denoted by $W_T$, then equals to 
\begin{equation}
\label{WT}
W_T=\lf w_T \rf \1_{w_T\geq0}+\lceil w_T \rceil \1_{w_T<0}.
\end{equation}
Here $\lf\cdot\rf$ and $\lceil \cdot \rceil$ denote  the floor and ceiling
functions, respectively.
Thus, to study the large time behavior of $w_T$ is equivalent to that of $W_T$. 
Denote the quenched probability measure by $\hat{\Pb}_T$ and  the expectation with respect to it by $\hat{\E}_T$, so  for any bounded function $f:\R\to\R$, we have $\hat{\E}_T f(w_T)=\int_\R f(x)\rho(T,x)dx$. Let $\PP, \EE$ be the probability and expectation with respect to the noise $\xi$.  Now we can state the main result of the paper, namely, under the annealed polymer measure $\PP\otimes \hat{\Pb}_T$, $\{\frac{w_T}{\sqrt{T}}\}_{T>0}$, or equivalently, $\{\frac{W_T}{\sqrt{T}}\}_{T>0}$, satisfies a central limit theorem. 
\begin{theorem}\label{t.mainth}
There exists $\sigma_{\mathrm{eff}}^2\in(0,\infty)$, given in \eqref{e.defsigma} below,  such that for any $\theta\in\R$, we have
\[
\EE \hat{\E}_T \exp(i\theta \frac{w_T}{\sqrt{T}})\to \exp(-\frac12\sigma_{\mathrm{eff}}^2 \theta^2), \quad\quad \mbox{ as }T\to\infty.
\]

\end{theorem}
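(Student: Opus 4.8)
\emph{Proof sketch.} The plan is to express the annealed characteristic function as a ratio of \emph{twisted} partition functions of the stochastic heat equation on the circle $\bT=\R/\Z$, and then to extract the central limit theorem from the geometric ergodicity (synchronization by noise) of the directed polymer on the cylinder, via a block decomposition.

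\textbf{Step 1 (reduction to twisted partition functions).} Since $|w_T-W_T|<1$ always, $\EE\hat\E_T e^{i\theta W_T/\sqrt{T}}$ and $\EE\hat\E_T e^{i\theta w_T/\sqrt{T}}$ have the same large-$T$ limit, so it suffices to treat $w_T$. Per Section~\ref{s.she}, let $\cZ_{s,t}$ be the propagator of the SHE $\partial_t u=\tfrac12\partial_x^2u+\beta\xi u$, so that $Z_T(x)=\cZ_{0,T}(0,x)$, and the spatial $1$-periodicity of $\xi$ gives $\cZ_{s,t}(x+1,y+1)=\cZ_{s,t}(x,y)$. For $\lambda\in\R$ define the $\lambda$-twisted circle propagator $\Psi^{(\lambda)}_{s,t}(x,y)=\sum_{k\in\Z}e^{i\lambda k}\cZ_{s,t}(x,y+k)$, $x,y\in\bT$; using periodicity one checks the cocycle identity $\Psi^{(\lambda)}_{s,u}(x,z)=\int_{\bT}\Psi^{(\lambda)}_{s,t}(x,y)\Psi^{(\lambda)}_{t,u}(y,z)\,dy$, and ($\Psi^{(0)}$ being the ordinary SHE propagator on $\bT$) a direct computation from the definitions of $\rho$ and $\hat\E_T$ gives, with $\lambda=\theta/\sqrt{T}$ and $\mathbf 1$ the constant function,
\[
\EE\hat\E_T\exp\!\Big(i\theta\tfrac{w_T}{\sqrt{T}}\Big)=\EE\!\left[\frac{\langle\delta_0,\Psi^{(\lambda)}_{0,T}\mathbf 1\rangle}{\langle\delta_0,\Psi^{(0)}_{0,T}\mathbf 1\rangle}\right]+O(T^{-1/2}),
\]
the denominator being $Z_T$ and the integrand having modulus $\le1$.

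\textbf{Step 2 (block decomposition and the CLT mechanism).} Fix a large block length $\tau$, put $N=\lf T/\tau\rf$, and use the cocycle identity at times $0,\tau,\dots,N\tau,T$ to write numerator and denominator as ordered products of the single-block operators $\Psi^{(\lambda)}_{(j-1)\tau,j\tau}$ on $L^1(\bT)$. The key probabilistic input, which is the heart of the argument, is the geometric ergodicity of the untwisted propagator: a synchronization-by-noise (spectral-gap) statement under which the normalized forward evolution $\Psi^{(0)}_{0,t}(x,\cdot)/\langle\delta_x,\Psi^{(0)}_{0,t}\mathbf 1\rangle$ converges, uniformly in $x$ and with exponential rate, to the stationary random endpoint density $\bar\rho_\infty$ on $\bT$ determined by the past noise, and likewise for the backward point-to-line evolution $\Psi^{(0)}_{t,T}\mathbf 1$; this we establish using positivity of the kernel together with one- and two-point second-moment bounds. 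Granting it, on each block $\Psi^{(\lambda)}_{(j-1)\tau,j\tau}=\Psi^{(0)}_{(j-1)\tau,j\tau}+O(\lambda)$, and a replacement argument --- swap twisted for untwisted one block at a time, using the spectral gap to stop the error from propagating --- reduces the ratio in Step~1 to
\[
\prod_{j}\Big(1-\tfrac{\lambda^2}{2}\,v(\tau)+O(|\lambda|^3\tau^{3/2})+\varepsilon_j\Big),
\]
where $v(\tau)=\Var^{\mathrm{stat}}(\Delta_\tau)$ is the variance of the winding increment $\Delta_\tau$ over a block of length $\tau$ under the \emph{stationary} polymer and the $\varepsilon_j$ are equilibration/replacement errors, summable in $j$ by the exponential rate. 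The bounds $v(\tau)\sim\sigma_{\mathrm{eff}}^2\tau$ and $\EE^{\mathrm{stat}}|\Delta_\tau|^3\lesssim\tau^{3/2}$, which make the one-block expansion legitimate (the conditional moments are bounded since $\tau$ is fixed), follow from standard heat-kernel and Gaussian-type tail estimates for the SHE propagator over the bounded time window $\tau$.

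\textbf{Step 3 (conclusion and non-degeneracy).} Since $\lambda^2=\theta^2/T$ and $N\sim T/\tau$, one has $N\cdot\tfrac{\lambda^2}{2}v(\tau)=\tfrac{\theta^2}{2}\tfrac{v(\tau)}{\tau}\to\tfrac{\theta^2}{2}\sigma_{\mathrm{eff}}^2$, where $\sigma_{\mathrm{eff}}^2=\lim_{\tau\to\infty}v(\tau)/\tau$ exists because the covariances of the block winding increments decay (again by the spectral gap) and is the constant in \eqref{e.defsigma}; moreover $N|\lambda|^3\tau^{3/2}=\theta^3\sqrt{\tau/T}\to0$, and the $o(N)$ non-equilibrated boundary blocks near $0$ and $T$ together with $\sum_j\varepsilon_j$ contribute a factor tending to $1$ after $T\to\infty$ and then $\tau\to\infty$. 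A standard estimate for such products then yields convergence to $\exp(-\tfrac12\sigma_{\mathrm{eff}}^2\theta^2)$, which is the claim. Finiteness $\sigma_{\mathrm{eff}}^2<\infty$ is immediate from $v(\tau)\lesssim\tau$ plus summable covariances; positivity $\sigma_{\mathrm{eff}}^2>0$ is proved separately by a non-degeneracy argument --- a lower bound $\EE\hat\E_T[w_T^2]\ge cT$, equivalently ruling out localization of the winding number --- showing $v(\tau)/\tau$ stays bounded away from $0$.

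\textbf{Main obstacle.} The decisive difficulty is the quantitative ergodic input of Step~2: proving the spectral-gap/synchronization statement for the SHE propagator on the circle with a rate strong enough to control the one-block comparison and the accumulation of replacement and boundary errors, and in particular handling the point-to-line (free-endpoint) normalization, whose dependence on the terminal time $T$ must be shown --- via backward ergodicity --- to be asymptotically irrelevant; this rests on a priori Gaussian-type moment bounds for the winding increment over a bounded time window. Everything else --- the reduction of Step~1 and the bookkeeping of Steps~2--3 --- is routine once these are in place. (An essentially equivalent route writes $w_T=\int_0^T b_T(t,w_t)\,dt+B_T$ through the Girsanov drift of the polymer diffusion and derives the CLT from a Kipnis--Varadhan-type theorem for the additive functional of the environment viewed from the polymer endpoint; it relies on the same ergodicity.)
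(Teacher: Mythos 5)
Your Step 1 is correct and is a nice repackaging of the paper's
\eqref{e.pbn}: the twisted kernel
$\Psi^{(\lambda)}_{s,t}(x,y)=\sum_k e^{i\lambda k}\cZ_{s,t}(x,y+k)$ indeed
satisfies the cocycle identity, and
$\langle\delta_0,\Psi^{(\lambda)}_{0,T}\mathbf 1\rangle/\langle\delta_0,\Psi^{(0)}_{0,T}\mathbf 1\rangle
=\hat\E_T e^{i\lambda\lf w_T\rf}$, which differs from
$\hat\E_T e^{i\lambda w_T}$ by $O(\lambda)$. You also correctly identify
the decisive ergodic input (exponential synchronization of the endpoint
density on the cylinder, which is the content of
Proposition~\ref{p.tk1}, taken from \cite{GK21}). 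Your overall
architecture --- reduce to $\lf w_T\rf$, sum winding increments over time
blocks, use synchronization to decorrelate blocks --- is the same as the
paper's, which passes through the stationary path measure ${\cal P}$ on
$\Z^\Z$ and the $\vrho$-mixing estimate (Proposition~\ref{thm011207-22})
plus the classical CLT of \cite[Thm.~19.2]{bil}. The twisted transfer-operator
packaging is a genuinely different formalism, but it does not buy you a
shortcut around the mixing estimate.

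The genuine gap is in Step~2. The claimed expansion
$\EE\big[\langle\delta_0,\Psi^{(\lambda)}_{0,T}\mathbf 1\rangle/\langle\delta_0,\Psi^{(0)}_{0,T}\mathbf 1\rangle\big]
\approx\prod_j\big(1-\tfrac{\lambda^2}{2}v(\tau)+O(|\lambda|^3\tau^{3/2})+\varepsilon_j\big)$
does not hold as written. First, the quenched quantity is a ratio of
\emph{inner products} with a shared normalization, not a product of
scalars; each ``block factor'' depends on the (random) endpoint density
entering and leaving the block, so consecutive factors are correlated
through both the path and the environment, and after taking the annealed
$\EE$ the expectation does not distribute over the ``product''. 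Second,
your blocks $[(j-1)\tau,j\tau]$ are contiguous, so the correlation
between adjacent blocks is $O(1)$, not small; one would need a
Bernstein-type alternating-block construction (long blocks separated by
gaps of length $\gg1$) so that the synchronization actually decorrelates
neighboring factors, which you have not set up. The paper sidesteps
exactly this bookkeeping by constructing the stationary law ${\cal P}$
of $\{\eta_k\}$, proving the sharp $\vrho$-mixing bound
$|\cov[F,G]|\le Ce^{-\lambda n}\|F\|_{L^2}\|G\|_{L^2}$, and then invoking
an off-the-shelf mixing CLT; reproducing that control inside your
product is essentially equivalent in difficulty. Two smaller issues: the
remark that the synchronization/spectral gap ``we establish using
positivity of the kernel together with one- and two-point second-moment
bounds'' understates the work considerably --- that statement is the
main result of \cite{GK21} and rests on a Sinai-type coupling; and the
non-degeneracy $\sigma_{\mathrm{eff}}^2>0$ is asserted (``rule out
localization'') without a mechanism, whereas the paper proves the exact
identity $\EE\hat\E_N w_N^2\ge N$ via Galilean (shear) invariance of the
space-time white noise (Lemma~\ref{l.qvar}); you should supply an
argument of that nature rather than appeal to delocalization.
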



\subsection{Context and motivation}
Our study of the winding number is motivated by the  work of Brunet
\cite{brunet}, where the same problem was investigated by the replica
method. What is particularly interesting  is the exact formula he
derived  for $\sigma_{\mathrm{eff}}^2$ and how it depends on the size
of the period,  see \cite[Eq. (19)-(20)]{brunet}.  It is not hard to
convince oneself that Theorem~\ref{t.mainth} holds for any spatial
period, with the effective diffusion constant depending on the size of
the cell -- we chose the length $L=1$ only to simplify the notations. Denote the corresponding variance by $\sigma_{\mathrm{eff}}^2(L)$. It is known that the polymer path is  super-diffusive  with the
exponent $2/3$ when $L=\infty$, i.e., if there is no periodic structure,  $T^{-2/3}w_T$ is of order $O(1)$
for $T\gg1$, see \cite[Theorem 1.11]{corwin} for relevant results on this particular model. To go from the diffusive   to super-diffusive scaling as $L\to\infty$, it
is natural to expect  $\sigma_{\mathrm{eff}}^2(L)$ to blow up. This was indeed predicted in \cite{brunet}: as
$L\to\infty$, $\sigma_{\mathrm{eff}}^2(L)\sim \sqrt{L}$. The blow up
rate is related to the $2/3$ super-diffusion exponent, and here is a heuristic explanation: for cells of size $L$, the
displacement of the endpoint $w_T$ is of the order
$\sigma_{\mathrm{eff}}(L)\sqrt{T}$, provided that $L\sim O(1)$ and
$T\gg1$. As we keep $T\gg1$ fixed and slowly increase $L$, the polymer path would still visit
many cells provided that $\sigma_{\mathrm{eff}}(L)\sqrt{T} \gg L$. In this case
we still expect to see a homogenization phenomenon and the central limit
theorem as in Theorem~\ref{t.mainth} holds. So, it is natural to guess that the critical scale comes from balancing the two terms,  $\sigma_{\mathrm{eff}}(L)\sqrt{T}$ and $L$. This leads to $L\sim T^{2/3}$, under the assumption of $\sigma_{\mathrm{eff}}^2(L)\sim \sqrt{L}$. 

It was our hope to prove the above heuristics rigorously, and to confirm (or disprove) the replica calculations in \cite{brunet}. Theorem~\ref{t.mainth} can be viewed as a small step towards this goal, through which we confirmed the diffusive scaling and the Gaussian fluctuations. The formula derived for $\sigma_{\mathrm{eff}}^2$, see \eqref{e.defsigma} below, is of Green-Kubo type which involves the integral of some covariance function, and is too implicit  to perform any asymptotic analysis.  This is not surprising though, since the homogenization constant is usually given by the solution to some cell problem and precise estimates on it are   not easy to obtain (see a very recent contribution along this line for a different model of diffusion in random environment \cite{otto}). Guided by the same philosophy, a similar study was carried out for the fluctuations of the free energy $\log Z_T$, which leads to the optimal size of fluctuations in certain regimes where $L,T$ go to infinity together, see \cite{dgk}. 

For the connections between the winding number of the directed polymer in random environment and other models in statistical physics, such as vortices in superconductors and strongly correlated fermions, we refer to \cite{brunet} and the references cited there.

One can also formulate the problem as a diffusion in a
distribution-valued random environment and study the corresponding SDE
with a singular drift, see e.g. \cite{dr,cc,gp} and the references
therein. In this framework, making sense of the singular diffusion is
already highly nontrivial, and is intimately related to the study of
singular SPDE \cite{hairer1,hairer2,gip}.  For our specific problem of
directed polymer, one can view it as a passive scalar with the
velocity field given by the solution of a stochastic Burgers equation,
see \cite[Theorem 31]{dr} which gives a rigorous meaning of
it. Although the velocity field is spatial periodic, which is
sometimes viewed as the simple case in the study of homogenization
or invariance principle of diffusion in a random environment, the
problem does not  fall into any classical framework. It might be
possible to employ the tools developed for singular diffusions and
combine with homogenization type of arguments to study  similar
problems and to prove central limit type results. For this particular
problem, we make use of the structure of the Gibbs measure and give a
proof using a classical argument for   the central limit theorem for
weakly dependent random variables.

\subsection{Sketch of proof} 
Our approach relies heavily on the previous work of studying the
periodic KPZ equation \cite{GK21}, where we showed the endpoint
distribution of the directed polymer on a cylinder mixes exponentially
fast. The proof in \cite{GK21} was inspired by the classical work of
Sinai \cite{sinai}, which was on the stochastic Burgers equation
on the torus. Similar results were also  obtained in \cite{rosati},
using a random version of the Krein-Rutman theorem. As mentioned
previously, one could view the polymer path as lying on the cylinder
by considering the path $\{w_t-\lf w_t\rf\}_{t\geq0}$. Assuming that
at each integer time $k$, the position of the path is $x_k$, i.e., $w_k-\lf
w_k\rf=x_k$, our previous result implies a strong mixing property of
$\{x_k\}_{k\geq 1}$ under the polymer measure. If we denote $\eta_k$
the winding number of the polymer path accumulated during the interval
$[k-1,k]$, then the total winding number is simply $\sum_k\eta_k$. It
is not hard to deduce that, given the positions of $\{x_k\}_{k\geq
  1}$, the sequence of random variables $\{\eta_k\}_{k\geq 1}$ are
independent, so, the correlation only comes from the correlation in
those $x_k$. Our strategy will be to first consider the case when the
starting and ending points $x_0$ and $x_T$ are both sampled from the
stationary measure so that $\{\eta_k\}_{k\geq 1}$ is a sequence of
stationary random variables, and we will prove a   $\rho-$mixing
(correlation mixing) condition to apply the general central limit theorem for the sum of stationary random variables. Then, to finish the proof, we will show that the error induced by resampling $x_0$ and $x_T$ is asymptotically small, again using the strong mixing property of $\{x_k\}_{k\geq 1}$.

The same proof applies verbatim to the high dimensional setting when the random environment is assumed to be white in time and smooth in space.

\subsection*{Organization of the paper} In Section~\ref{s.she}, we
formulate the problem rigorously, define the endpoint distribution
through a stochastic heat equation, and construct a Markov chain which
keeps tracking the winding number of the polymer path as time
increases. Sections~\ref{s.clt} and \ref{s.clt1} are devoted to
proving the main result, by first reducing it to the stationary
setting, then proving the $\rho-$mixing condition for the stationary sequence $\{\eta_k\}$.  In Section~\ref{s.diff}, we prove the nondegeneracy of the variance $\sigma_{\mathrm{eff}}^2$. Some further discussion is left in Section~\ref{s.dis}.

\subsection*{Notations} We will sometimes use the shorthand integral notation $\int$ when the domain of integration is clear from the context. If we do not specify the range of the summation in $\sum_j$, it stands for $\sum_{j\in\Z}$.

\subsection*{Acknowledgement} We thank the anonymous referee for multiple suggestions and comments. Y.G. was partially supported by the NSF through DMS-2203014.  T.K. acknowledges the support of NCN grant 2020/37/B/ST1/00426.

\section{Preparations}
\label{s.she}

\subsection{Stochastic heat equation and endpoint density on $\R$}
As mentioned previously, the expression $Z_T=\E
\exp(\int_0^T\xi(t,w_t)dt)$ is only formal since $\xi$ is a space-time
white noise, and we actually need to consider the so-called Wick
exponential.  In this section, we define the endpoint density $\rho$
rigorously, through the stochastic heat equation (SHE). For an
excellent introduction to the theory of the stochastic heat equation, we refer to the monograph \cite{davar}.

Consider the equation of the form 
\begin{equation}\label{e.she}
\begin{aligned}
&\partial_t u(t,x;\nu)=\frac12\Delta u(t,x;\nu)+\beta \xi(t,x) u(t,x;\nu), \quad\quad  t>0, x\in\R,   \\
 &u(0,dx)=\nu(dx),
\end{aligned}
\end{equation}
where $\beta>0$, the product between $u$ and $\xi$ is interpreted in the Ito-Walsh sense, and $\nu\in{\cal M}_1(\R)$ - the set of Borel probability measures on $\R$. Denote
the propagator of the above equation by $Z_{t,s}(x,y)$, i.e. for each $(s,y)\in\R_+\times\R$ fixed, we have 
\begin{equation}\label{e.eqZ}
\begin{aligned}
&\partial_t Z_{t,s}(x,y)=\frac12\Delta_x Z_{t,s}(x,y)+\beta \xi(t,x)Z_{t,s}(x,y), \quad\quad t>s, x\in\R,\\
&Z_{s,s}(x,y)=\delta_y(x).
\end{aligned}
\end{equation}
Due to the $1$-periodicity of the noise, we obviously have
\begin{equation}
\label{Zts}
 Z_{t,s}(x+j,y+j)=Z_{t,s}(x,y),\quad j\in\Z,\,x,y\in \R, \,t>s.
\end{equation}
Let $\bT=[0,1]$ be the unit torus with the end points identified in the usual way.
Since $\xi$ is periodic, we can consider the same equation on $\bT$
with the periodic boundary condition. Then its propagator is given by 
\begin{equation}
\label{Gts}
G_{t,s}(x,y)=\sum_j Z_{t,s}(x+j,y)=\sum_j Z_{t,s}(x,y-j),  
\quad x,y\in\bT.
\end{equation}
In other words, $G_{t,s}(x,y)$ is the periodic solution to
\eqref{e.eqZ} with the initial data \blue{$G_{s,s}(x,y)=  \sum_{j}\delta_{y-j}(x)$}. 

With the above notations, the random density $\rho$, which is the  density of $w_T$ under the quenched polymer measure $\hat{\Pb}_T$ and was formally defined in \eqref{e.defrho}, takes the form  
\begin{equation}\label{e.defrho1}
\rho(T,x;\nu)=\frac{u(T,x;\nu)}{\int_\R u(T,x';\nu)dx'}.
\end{equation}
From now on, we choose $\nu(dx)$ to be the Dirac measure at the origin. To simplify the notation, we will omit the dependence on $\nu$ when there is no confusion.

\subsection{Endpoint density on the cylinder}
\label{s.polyt}
Besides studying the polymer endpoint on the whole line, we also consider its periodic counterpart:
 $$
\rho_{\rm per}(t,x;\nu)=\frac{v(t,x;\nu)}{\int_{\bT} v(t,x';\nu)dx'},
$$
where $\nu\in\mathcal{M}_1(\bT)$ and $v$ solves the equation 
\begin{equation}\label{e.she1a}
\begin{aligned}
&\partial_t v(t,x;\nu)=\frac12\Delta v(t,x;\nu)+\beta \xi(t,x) v(t,x;\nu), \quad\quad  t>0, x\in\bT,   \\
 &v(0,dx)=\nu(dx).
\end{aligned}
\end{equation}
Using the propagator, the solution can be written as 
\[
v(t,x;\nu)=\int_{\bT} G_{t,0}(x,y)\nu(dy).
\]
It turns out, see \cite[Lemma 2.2]{GK21}, that $ \{\rho_{\rm per}(t
)\}_{t\geq 0}= \{\rho_{\rm per}(t,\cdot;\nu)\}_{t\geq 0,\nu\in{\cal
    M}_1}$ is a \blue{Markov family.   For any $t>0$, the random element
  $\rho_{\rm per}(t,\cdot;\nu) $ takes values in ${\mathbb D}_c(\bT)$, which we use to denote the space
  of continuous probability densities on $\bT$. }

To simplify the notation, for any $t>s$, we define the forward and backward polymer densities starting from $\nu$ by
\begin{equation}\label{020107-22}
\begin{aligned}
&\rho_{\rm per}(t,x;s,\nu)=\frac{\int_{\bT}G_{t,s}(x,y)\nu(dy)}{\int_{\bT^2}
  G_{t,s}(x',y')\nu(dy')dx'},\\
&\tilde{\rho}_{\rm per}(t,\nu;s,x)=\frac{\int_{\bT}G_{t,s}(y,x)\nu(dy)}{\int_{\bT^2} G_{t,s}(y',x')\nu(dy')dx'}.
\end{aligned}
\end{equation}
\blue{We have $\rho_{\rm per}(t,x;\nu) = \rho_{\rm per}(t,x;0,\nu)$.} By the time reversal of the space-time white noise, \blue{for any $t>s$ and
$\nu$ fixed,} we have 
\[
\{\rho_{\rm per}(t,x;s,\nu)\}_{x\in\bT}\stackrel{\text{law}}{=}\{\tilde{\rho}_{\rm per}(t,\nu;s,x)\}_{x\in\bT}
\]
We emphasize that, throughout the paper, the notation $\rho(t,\cdot;\nu)$ is the endpoint density of the polymer on the whole line, while the notations $\rho_{\rm per} (t,\cdot;s,\nu)$ and $\tilde{\rho}_{\rm per} (t,\nu;s,\cdot)$ are reserved for the endpoint density on the torus.

Now we summarize a few results concerning  the
properties  of the Markov family $\{\rho_{\rm per}(t)\}_{t\geq0}$, see
\cite[Theorem 2.3, Eq. (4.17), Lemma 4.1, Proposition 4.6]{GK21}. 

\begin{proposition}
\label{p.tk1}
There exists a unique invariant measure $\pi_\infty$ for  $\{\rho_{\rm per}(t)\}_{t\geq0}$, supported on ${\mathbb D}_c(\bT)$. For any $p\geq 1$, there exists $C,\lambda>0$ such that for all $t>1$, 
\begin{equation}
\EE \sup_{\nu,\nu'\in\mathcal{M}_1(\bT)} \sup_{x\in\bT} |\rho_{\rm per}(t,x;\nu)-\rho_{\rm per}(t,x;\nu')|^p  \leq Ce^{-\lambda t}, 
\end{equation}
and
\begin{equation}\label{e.mmbdrho}
\EE \sup_{\nu\in \mathcal{M}_1(\bT)}\sup_{x\in\bT}\{\rho_{\rm per}(t,x;\nu)^p +\rho_{\rm per}(t,x;\nu)^{-p}\} \leq C.
\end{equation} 
\end{proposition}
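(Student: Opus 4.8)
The plan is to regard the statement as an assembly of results from \cite{GK21,funaki} and to recall the architecture of their proof. The engine is a contraction estimate for the random positive linear evolution $\nu\mapsto \int_\bT G_{t,0}(\cdot,y)\,\nu(dy)$ on the cone of finite measures on $\bT$, phrased in the Hilbert--Birkhoff projective metric, together with uniform moment bounds for the propagator $G_{t,0}$. I would establish the moment bound \eqref{e.mmbdrho} first, then the exponential contraction, deduce existence and uniqueness of $\pi_\infty$ from it, and finally identify $\pi_\infty$ by a separate computation tying it to the stationary profile of the periodic KPZ equation.

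For \eqref{e.mmbdrho}: since $\nu$ is a probability measure and $|\bT|=1$, both the numerator $\int_\bT G_{t,0}(x,y)\nu(dy)$ and the denominator $\int_{\bT^2}G_{t,0}(x',y')\nu(dy')dx'$ lie between $\inf_{\bT^2}G_{t,0}$ and $\sup_{\bT^2}G_{t,0}$, so
\[
\frac{\inf_{\bT^2}G_{t,0}}{\sup_{\bT^2}G_{t,0}}\ \le\ \rho_{t,0}(x;\nu)\ \le\ \frac{\sup_{\bT^2}G_{t,0}}{\inf_{\bT^2}G_{t,0}}\qquad\text{for all }x\in\bT,\ \nu\in\mathcal{M}_1(\bT).
\]
It therefore suffices to bound the positive moments of $\sup_{\bT^2}G_{t,0}$ and the negative moments of $\inf_{\bT^2}G_{t,0}$. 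For $t\ge1$ the Ito--Walsh propagator of the SHE on $\bT$ is an a.s.\ continuous and strictly positive function whose positive moments are standard and whose negative moments are finite by the quantitative positivity results for the SHE; the semigroup identity $G_{t,0}=\int_\bT G_{t,t-1}(\cdot,z)G_{t-1,0}(z,\cdot)\,dz$ together with the stationarity of the noise in time make these moments bounded uniformly in $t\ge1$, which gives \eqref{e.mmbdrho}.

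For the exponential contraction: decompose time into unit intervals, and on each interval $[k-1,k]$ consider the random positive operator induced by $G_{k,k-1}$. On the event $\mathcal{G}_k$ that $a_k\le G_{k,k-1}(x,y)\le b_k$ for all $x,y$ with $b_k/a_k$ moderate, Birkhoff's theorem gives a contraction in the projective metric with coefficient $\tanh\!\big(\tfrac12\log(b_k/a_k)\big)<1$; off $\mathcal{G}_k$ the operator is still a weak contraction (coefficient $\le1$). The events $\mathcal{G}_k$ are independent across $k$ and have probability bounded below, so by time $t$ there are $\ges t$ good intervals except on an event of probability $\le e^{-ct}$, on whose complement the product of the interval contraction coefficients is $\le c_0^{\,ct}$ for some $c_0<1$. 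Since $\sup_x|\rho_{t,0}(x;\nu)-\rho_{t,0}(x;\nu')|$ is bounded in every $L^q(\PP)$ by \eqref{e.mmbdrho}, H\"older's inequality absorbs the bad event, and converting the projective bound back to the sup-norm (again using the two-sided bounds on $\rho$) yields the stated exponential mixing estimate for every $p\ge1$. Uniqueness of $\pi_\infty$ is then immediate, and existence follows by checking that $\mathrm{Law}\big(\rho_{t,0}(\cdot;\nu)\big)$ is Cauchy in the Wasserstein distance over $\mathbb{D}_c(\bT)$ — combine the Markov property with the contraction to compare two times $t_1<t_2$; continuity and strict positivity of $\rho_{t,0}(\cdot;\nu)$ place $\pi_\infty$ on $\mathbb{D}_c(\bT)$, and the same contraction shows the limit does not depend on $\nu$.

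Finally, to identify $\pi_\infty$, write $v(t,\cdot;\nu)=e^{h(t,\cdot)}$ so that $h$ solves the KPZ equation on $\bT$ and $\rho_{\rm per}(t,\cdot)$ depends on $h$ only modulo an additive constant. The stationary law of the periodic KPZ profile, taken modulo constants, is that of a Brownian bridge with $B(0)=B(1)=0$ (see \cite{funaki}); pushing this law forward under $B\mapsto e^{B}/\int_\bT e^{B}$ produces a measure that one verifies is invariant for $\{\pi_t\}$ by a direct Gaussian computation against the SHE semigroup, and by uniqueness it equals $\pi_\infty$. The \textbf{main obstacle} I anticipate is the exponential contraction step: upgrading the almost-sure Birkhoff contraction to the clean $L^p$ bound with a deterministic rate forces one to interleave the block large-deviation estimate with the moment bounds of the first step; the explicit identification of $\pi_\infty$ is the second delicate point, but it is essentially inherited from the known stationary theory for periodic KPZ/SHE.
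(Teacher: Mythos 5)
This proposition is not proved in the paper; it is stated explicitly as a summary of results imported from \cite[Theorem 2.3, Eq.\ (4.17), Lemma 4.1, Proposition 4.6]{GK21} and \cite[Theorems 1.1 and 2.1]{funaki}. So there is no ``paper's own proof'' to match your proposal against in the usual sense. That said, the route you sketch is a genuine alternative to the one the authors cite. The exponential mixing in \cite{GK21} is obtained by an argument the paper describes as inspired by Sinai's work on the periodic Burgers equation, which tracks a one-force-one-solution / coupling structure for the endpoint density, not the Hilbert--Birkhoff projective metric. Your block-wise Birkhoff contraction (independent unit intervals, a.s.\ contraction on a good event, weak contraction otherwise, large deviations to concentrate the product) is instead essentially the mechanism of \cite{rosati}, which the paper explicitly mentions as achieving ``similar results'' via a random Krein--Rutman theorem. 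Both approaches yield the same quantitative statement; yours is arguably cleaner to set up abstractly, whereas the Sinai-style argument in \cite{GK21} is the one the authors actually rely on for the auxiliary moment estimates used elsewhere in this paper (e.g.\ Lemma~\ref{l.mmZ}, \eqref{e.mmbdrho1}). Your identification of $\pi_\infty$ via the Brownian-bridge stationary profile of periodic KPZ, imported from \cite{funaki}, is exactly how the paper sources it.

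One step in your sketch is, as written, incorrect. To establish \eqref{e.mmbdrho} you say it suffices to bound ``the positive moments of $\sup_{\bT^2}G_{t,0}$ and the negative moments of $\inf_{\bT^2}G_{t,0}$'' and that the semigroup identity and time-stationarity make these ``bounded uniformly in $t\ge1$.'' That is false: each of $\EE(\sup_{\bT^2}G_{t,0})^p$ and $\EE(\inf_{\bT^2}G_{t,0})^{-p}$ grows exponentially in $t$, because the total mass $\int_\bT G_{t,0}(x,y)\,dx$ has a nonzero Lyapunov exponent. What is uniformly bounded is the \emph{ratio}. You must run the semigroup argument on the ratio directly: writing $G_{t,0}(x,y)=\int_\bT G_{t,t-1}(x,z)\,G_{t-1,1}(z,w)\,G_{1,0}(w,y)\,dz\,dw$ gives
\[
\sup_{x,y,x',y'}\frac{G_{t,0}(x,y)}{G_{t,0}(x',y')}
\;\le\;
\frac{\sup_{x,z}G_{t,t-1}(x,z)}{\inf_{x,z}G_{t,t-1}(x,z)}
\cdot
\frac{\sup_{w,y}G_{1,0}(w,y)}{\inf_{w,y}G_{1,0}(w,y)},
\]
where the two factors are independent (for $t>2$) and identically distributed by time-stationarity, and each has all moments by the unit-time SHE estimates. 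Taking $p$-th moments then yields \eqref{e.mmbdrho}. With this repair the rest of your sketch goes through at the level of a sketch, but you should state the bound in terms of the ratio, not in terms of $\sup G$ and $\inf G$ separately.
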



\subsection{A Markov chain for the winding number}
\label{s.markov}

 As the random environment is periodic in space, to study the
 displacement of the polymer endpoint $w_T$, it is equivalent to
 studying the winding number of the polymer path when we view it as lying on a cylinder by considering the trajectory $\{w_t-\lf w_t\rf\}_{t\in[0,T]}$. This is the perspective we will take from now on. The idea is to first sample the trajectory of the polymer path on the cylinder at integer times, then consider the winding of the path between successive integer times.


For any $N\in \Z_+$, consider $u(N,j_N+x_N)$ where $x_N\in [0,1)$ and $j_N\in \Z$. By the definition of the propagator, we have 
\[
\begin{aligned}
&u(N,j_N+x_N)=\int_{\R} Z_{N,N-1}(j_N+x_N,y)u(N-1,y)dy\\
&=\sum_{j_{N-1}}\int_{\bT}Z_{N,N-1}(j_N+x_N,j_{N-1}+x_{N-1})u(N-1,j_{N-1}+x_{N-1})dx_{N-1}.
\end{aligned}
\]
Iterate the above relation, we reach at (recall that $u(0,x)=\delta(x)$)
\[
\begin{aligned}
&u(N,j_N+x_N)\\
&= \sum_{j_1,\ldots,j_{N-1}}\int_{\bT^{N-1}} \prod_{k=1}^N Z_{k,k-1}(j_k+x_k,j_{k-1}+x_{k-1})d\x_{1,N-1},
\end{aligned}
\]
where we used the simplified notation $d\x_{1,N-1}=dx_1\ldots dx_{N-1}$ and the convention $
j_0=x_0=0.$
 In other words, in the above integration, we have decomposed the domain $\R$ as $\R=\cup_j [j,j+1)$, then integrate in each interval and sum them up. One should think of the variable $j_k+x_k$ as representing the location of the polymer path at time $k$, with $j_k$ the integer part and $x_k$ the fractional part, i.e., $j_k=\lf w_k\rf$ and $x_k=w_k-\lf w_k\rf$.

Now we make use of the periodicity and observe that 
\begin{equation}\label{e.peZG}
\begin{aligned}
&\sum_{j_k}Z_{k,k-1}(j_k+x_k,j_{k-1}+x_{k-1})=\sum_{j_k} Z_{k,k-1}(j_k-j_{k-1}+x_k,x_{k-1})\\
&=\sum_{j_k} Z_{k,k-1}(j_k+x_k,x_{k-1})=G_{k,k-1}(x_k,x_{k-1}),
\end{aligned}
\end{equation}
where $G$ is the periodic propagator defined in \eqref{Gts}. Then we can write 
\begin{equation}\label{e.7151}
\begin{aligned}
u(N,j_N+x_N)
=\int_{\bT^{N-1}}  &\left(\sum_{j_1,\ldots,j_{N-1}} \frac{\prod_{k=1}^N Z_{k,k-1}(j_k+x_k,j_{k-1}+x_{k-1})  }{\prod_{k=1}^N G_{k,k-1}(x_k,x_{k-1}) }\right)\\
&\times \prod_{k=1}^N G_{k,k-1}(x_k,x_{k-1})  dx_{1,N-1}.
\end{aligned}
\end{equation}

Fix the realization of the random noise and  
\begin{equation}\label{e.bx}
\x=(x_0,x_1,\ldots,x_{N})\in \bT^{N+1}.
\end{equation} 
We construct an integer-valued, time inhomogeneous Markov chain $\{Y_j\}_{j=1}^N$, with 
\begin{equation}\label{e.6174}
\begin{aligned}
& \Pb_\x[Y_1=j_1]= \frac{Z_{1,0}(j_1+x_1,x_0)}{G_{1,0}(x_1,x_0)},\\
&\Pb_\x[Y_2=j_2|Y_1=j_1]= \frac{Z_{2,1}(j_2+x_2,j_1+x_1)}{G_{2,1}(x_2,x_1)},\\
&\ldots\\
&\Pb_\x[Y_N=j_N|Y_{N-1}=j_{N-1}]=\frac{Z_{N,N-1}(j_N+x_N,j_{N-1}+x_{N-1})}{G_{N,N-1}(x_N,x_{N-1})}.
\end{aligned}
\end{equation}
With the Markov chain, one can write the summation in \eqref{e.7151} as 
\[
\sum_{j_1,\ldots,j_{N-1}}\frac{\prod_{k=1}^N Z_{k,k-1}(j_k+x_k,j_{k-1}+x_{k-1})  }{\prod_{k=1}^N G_{k,k-1}(x_k,x_{k-1})  }=\Pb_{\mathbf{x}}[Y_N=j_N],
\]
where, to emphasize the dependence of the Markov chain on
$x_0,x_1,\ldots,x_N$, we have denoted the probability by $\Pb_{\mathbf{x}}$.

In this way, \eqref{e.7151} is rewritten as 
\begin{equation}\label{e.6172}
\begin{aligned}
u(N,j_N+x_N)=\int_{[0,1]^{N-1}}\Pb_{\mathbf{x}}[Y_N=j_N]\prod_{k=1}^N G_{k,k-1}(x_k,x_{k-1})   d\x_{1,N-1}.
\end{aligned}
\end{equation}
Recall that $\hat{\Pb}_N$ is the quenched probability of the polymer
measure on paths of length $N$, and $\lf w_N\rf $ is the integer part
of the endpoint $w_N$. Then 
\begin{equation}\label{e.6173}
\begin{aligned}
&\hat{\Pb}_N[ \lf w_N\rf=j_N]=\int_{\bT} \rho(N,j_N+x_N)dx_N=\frac{\int_{\bT}u(N,j_N+x_N)dx_N}{\int_{\R} u(N,x')dx'}\\
&=\frac{\int_{\bT^{N}}\Pb_{\mathbf{x}}[Y_N=j_N]\prod_{k=1}^N G_{k,k-1}(x_k,x_{k-1})   d\x_{1,N}}{\int_{\bT^{N}}\prod_{k=1}^N G_{k,k-1}(x_k,x_{k-1})   d\x_{1,N}}.
\end{aligned}
\end{equation}
In other words, the quenched distribution of $\lf w_N\rf $ is a weighted average of the distribution of $Y_N$ (the average is over the $\x$ variable). 

We introduce another notation: suppose that $f,g\in{\mathbb D}_c(\bT)$, define
\begin{equation}\label{e.muN3}
\mu_N(\x;f, g ):=\frac{f(x_N)\prod_{k=1}^N
  G_{k,k-1}(x_k,x_{k-1})g(x_0)}{G_{N,0}(f,g)},
\end{equation}
with $\x=(x_0,\ldots,x_N)$ and the normalization factor 
\begin{equation}
\label{GNM}
G_{N,0}(f,g):=\int_{\bT^{N+1}}f(x_N)\prod_{k=1}^N
  G_{k,k-1}(x_k,x_{k-1})  g(x_0)  d\x_{0,N},
\end{equation}
where   $d\x_{0,N}:=dx_0\ldots dx_N$. For each realization of the random environment, one should view $\mu_N(\x;f,g)$ as the joint density of the polymer on the cylinder, evaluated at $(0,x_0),(1,x_1),\ldots,(N,x_N)$, with the starting and ending points sampled from the densities $g,f$ respectively. For any $\nu,\nu'\in \mathcal{M}_1(\bT)$, we abuse the notation and write $\mu_N(\x;\nu,\nu')$ as well, meaning that the starting and ending points are sampled from $\nu',\nu$. In this case, $G_{N,0}(\nu,\nu')$ equals to
\[
G_{N,0}(\nu,\nu'):=\int_{\bT^{N+1}}\prod_{k=1}^N
  G_{k,k-1}(x_k,x_{k-1})  \nu'(dx_0)  d\x_{1,N-1}\nu(dx_N).
  \]

With the above new notation, we can rewrite 
\begin{equation}\label{e.pbn}
\hat{\Pb}_N[\lf w_N\rf =j_N]=\int_{\bT^{N+1}}\Pb_{\mathbf{x}}[Y_N=j_N] \mu_N(\x;{\rm m},\delta_0) d\x_{0,N},
\end{equation}
where ${\rm m}$ is the Lebesgue measure on $\bT$ (note that in \eqref{e.6173}, the convention is $x_0=0$). 


By \eqref{e.peZG} and \eqref{e.6174}, it is clear that $Y_N$ is a sum of independent random variables, for each fixed realization of the noise and $\x$. We rewrite it as 
\begin{equation}\label{e.defeta}
Y_N=\sum_{k=1}^N \eta_k, \quad\quad \eta_k=Y_k-Y_{k-1}, \quad\quad Y_0=0.
\end{equation}
One should interpret $\eta_k$ as the winding number accumulated during the time interval $[k-1,k]$, and   we have 
\begin{equation}\label{e.laweta}
\Pb_\x[\eta_k=j]=\frac{Z_{k,k-1}(x_k+j,x_{k-1})}{G_{k,k-1}(x_k,x_{k-1})}, 
\quad\quad j\in \Z.
\end{equation}

To prove Theorem~\ref{t.mainth} for the winding number $W_N$, see \eqref{WT}, or the endpoint $w_N$, it is equivalent to proving it for $\lf w_N\rf$. From now on, we will focus on the law of $\lf w_N\rf$ and the rest of the analysis starts from  the representation \eqref{e.pbn}.

\section{Proof of the central limit theorem}
\label{s.clt}

%
%
%
The goal is to prove the central limit theorem for
$\frac{w_T}{\sqrt{T}}$ under the annealed polymer measure $\PP\otimes \hat{\Pb}_T$. 
For  $\theta\in\bbR$, define
$$
\varphi_T(\theta):=\EE \hat{\E}_T e^{i\theta w_T/\sqrt{T}}=\EE \int_{\R} \exp\left\{\frac{i\theta  x}{\sqrt{T}}\right\}\rho(T,x)dx,
$$
where
$\rho$ was defined in \eqref{e.defrho} with $\nu$ chosen to be the Dirac measure at the origin. In this section, we will  consider those $T$ taking integer values, and the main goal is to show
\begin{theorem}
  \label{clt}
  We have
  \begin{equation}
  \lim_{N\to\infty}\varphi_N(\theta)=\exp\left\{-\frac{(\si_{\mathrm{eff}}\theta)^2}{2}\right\},\quad
  \theta\in\bbR,
\end{equation}
with   $\si_{\mathrm{eff}}$ given by \eqref{e.defsigma} below.
  \end{theorem}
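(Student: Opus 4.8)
The plan is to reduce the central limit theorem to a statement about the stationary sequence $\{\eta_k\}$ and then apply a classical CLT for weakly dependent random variables. First I would replace the quenched starting measure $\delta_0$ and the "free" ending condition in \eqref{e.pbn} by the stationary measure $\pi_\infty$ at both ends. Concretely, using Proposition~\ref{p.tk1}, the densities $\rho_{t,0}(x;\nu)$ forget their initial data at an exponential rate uniformly in $\nu$, and the moment bounds \eqref{e.mmbdrho} give uniform control on $\rho$ and $\rho^{-1}$; together these let me compare $\mu_N(\x;\mathrm{m},\delta_0)$ with $\mu_N(\x;f,g)$ where $(f,g)$ are sampled from the stationary process, at a cost that is $o(1)$ in the characteristic function after dividing by $\sqrt N$. (More precisely one pays an error on a time window of length $O(\log N)$ near each endpoint, which affects $\lf w_N\rf$ by $O(\log N)=o(\sqrt N)$, hence is negligible in $\varphi_N(\theta)$.) After this reduction it suffices to prove the CLT for $\frac{1}{\sqrt N}\sum_{k=1}^N \eta_k$ under the annealed measure $\PP$ coupled with the stationary version of $\mu_N$.

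Next I would verify that, in the stationary regime, $\{\eta_k\}_{k\ge1}$ is a stationary sequence of mean-zero (by the up–down symmetry of the white noise, $\eta_k \stackrel{d}{=}-\eta_k$) random variables with all moments finite: the law \eqref{e.laweta} is $\Pb_\x[\eta_k=j]=Z_{k,k-1}(x_k+j,x_{k-1})/G_{k,k-1}(x_k,x_{k-1})$, and the Gaussian-type off-diagonal decay of the SHE propagator $Z$ (sub-Gaussian tails in $j$, uniformly over the torus variables, with the denominator bounded below by \eqref{e.mmbdrho}) gives $\EE\,\hat{\E}|\eta_k|^p<\infty$ for every $p$. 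Then I would establish the $\varrho$-mixing (maximal-correlation) condition: $\varrho(n):=\sup\{|\mathrm{Corr}(f,g)| : f\in\mathcal F_{\le a}, g\in\mathcal F_{\ge a+n}\}\to 0$, where the $\sigma$-algebras are those generated by the noise together with the $\eta$'s on the respective time windows. This is where Proposition~\ref{p.tk1} does the real work: conditioning on $\{x_k\}$ makes the $\eta_k$ independent, so all dependence is carried by the chain of torus positions, and the exponential forgetting of the periodic polymer endpoint density propagates a coupling between the "past" and "future" configurations that decays exponentially in the gap $n$; combined with the uniform moment bounds this yields $\varrho(n)\le Ce^{-cn}$, in particular $\sum_n \varrho(2^n)<\infty$, which is the hypothesis of the Ibragimov–Peligrad CLT for $\varrho$-mixing stationary sequences. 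That theorem then gives $\frac{1}{\sqrt N}\sum_{k=1}^N\eta_k \Rightarrow \mathcal N(0,\sigma_{\mathrm{eff}}^2)$ with $\sigma_{\mathrm{eff}}^2 = \lim_{N\to\infty}\frac1N \Var\big(\sum_{k=1}^N\eta_k\big)$, the quantity to be identified with \eqref{e.defsigma}; its strict positivity is deferred to Section~\ref{s.diff}.

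The main obstacle I anticipate is the $\varrho$-mixing estimate, specifically getting a clean decorrelation bound for functionals that depend jointly on the white noise $\xi$ on a time interval and on the chain positions $\{x_k\}$ at the ends of that interval. The subtlety is that $\eta_k$ in window $[k-1,k]$ depends on $\xi$ restricted to $(k-1,k)$ \emph{and} on $x_{k-1},x_k$, so the relevant filtrations are not simply the noise filtrations; one must show that the map from $\pi_\infty$-distributed endpoints to the law of a block of $\eta$'s is Lipschitz in total variation (or in the relevant moments) with constants controlled by \eqref{e.mmbdrho}, and then feed in the exponential coupling of the positions from \cite{GK21}. A secondary technical point is making the endpoint-resampling argument quantitative enough that the $O(\log N)$ boundary corrections are genuinely controlled in the characteristic function rather than merely in distribution; here one uses that $e^{i\theta(\lf w_N\rf - Y_N^{\mathrm{stat}})/\sqrt N}\to1$ whenever the two differ by $o(\sqrt N)$, uniformly over the noise up to a vanishing-probability event, and dominated convergence.
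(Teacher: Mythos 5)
Your proposal follows the paper's strategy essentially verbatim: reduce to the stationary boundary via the exponential forgetting of Proposition~\ref{p.tk1}, verify mean zero and finite second moment for the stationary sequence $\{\eta_k\}$, prove $\varrho$-mixing with exponential rate by exploiting that conditioning on the torus positions makes the $\eta_k$ independent, and conclude via the classical CLT for $\varrho$-mixing stationary sequences (the paper cites \cite[Theorem 19.2]{bil}, the same Ibragimov--Peligrad-type result you invoke). The only cosmetic divergence is that you obtain $E\eta_k=0$ by a direct spatial reflection symmetry of the stationary picture, whereas the paper deduces it by showing $N^{-1}\EE\hat\E_N\lf w_N\rf\to E\eta_1$ and using symmetry of $\EE\hat\E_N w_N$; both are correct.
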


To show the above theorem it suffices to consider the   integer part of $w_N$. Define 
\begin{align}
\label{psiN1}
\psi_N(\theta)=\EE \int  \exp\left\{\frac{i\theta
  \lf x\rf }{\sqrt{N}}\right\}\rho(N,x)dx.
\end{align}
We focus on finding the limit of $\psi_N(\theta)$. From the construction of the Markov chain in Section~\ref{s.markov} and \eqref{e.pbn}, we have
\begin{align*}
\psi_N(\theta)
&=\EE \int_{\bT^N} \E_{\x}\exp\left\{\frac{i\theta
  Y_N}{\sqrt{N}}\right\} \mu_N(\x;{\rm m},\delta_0)d\x_{0,N} \\
  &=\EE \int_{\bT^N} \E_{\x}\exp\left\{\sum_{k=1}^N\frac{i\theta
  \eta_k}{\sqrt{N}}\right\} \mu_N(\x;{\rm m},\delta_0)d\x_{0,N} \\
&
=\EE \int_{\bT^N} \prod_{k=1}^N\E_{\x}\exp\left\{\frac{i\theta
  \eta_k}{\sqrt{N}}\right\} \mu_N(\x;{\rm m},\delta_0)d\x_{0,N} .
\end{align*}
Here we used $\E_\x$ to denote the expectation with respect to $\Pb_\x$.

\subsection{Estimates of the moments of increments}

%

For any $\nu,\nu'\in \mathcal{M}_1(\bT)$ and $p\geq 1$, define
\begin{equation}\label{010107-22}
E_{N,k}^{(p)}(\nu,\nu'):=\EE \int_{\bT^{N+1}} (\E_\x|\eta_k|^p)
\mu_N(\x;\nu,\nu')d\x_{0,N} .
\end{equation}
Recall that $\eta_k$ was defined in \eqref{e.defeta} and its law is given by \eqref{e.laweta}.
The following result holds.
\begin{lemma}
\label{lm010107-22}
For any $p>1$, we have
\begin{align}
\label{040107-22}
 {\frak E}_p:=\sup_{\nu,\nu'\in{\cal
  M}_1(\bT)}\sup_{N\geq 1}\sup_{k=1,\ldots,N} E_{N,k}^{(p)}(\nu,\nu') <\infty.
\end{align}
\end{lemma}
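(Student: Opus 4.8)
The quantity $E_{N,k}^{(p)}(\nu,\nu')$ is an average, against the joint polymer density $\mu_N(\x;\nu,\nu')$, of the quenched moment $\E_\x|\eta_k|^p$, and by \eqref{e.laweta} the latter is $\sum_{j\in\Z}|j|^p Z_{k,k-1}(x_k+j,x_{k-1})/G_{k,k-1}(x_k,x_{k-1})$. The first step is to observe that when we integrate $\mu_N(\x;\nu,\nu')$ against a function depending only on the single-time variable $x_k$ (here the function is $\E_\x|\eta_k|^p$), the factors $\prod_{m=1}^{N}G_{m,m-1}$ and the normalization $G_{N,0}(\nu,\nu')$ collapse: integrating out $x_0,\dots,x_{k-1}$ and $x_{k+1},\dots,x_N$ turns $\mu_N$ into a product $\rho_{k,0}(x_k;\nu')\,\tilde\rho_{N,k}(\nu;x_k)$, up to a scalar normalization that is itself an integral of this product over $x_k$. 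More precisely I would rewrite
\[
E_{N,k}^{(p)}(\nu,\nu')=\EE \int_{\bT} \left(\E_\x|\eta_k|^p\right)\,\frac{\rho_{k,0}(x_k;\nu')\,\tilde\rho_{N-k,0}(\nu;x_k)}{\int_{\bT}\rho_{k,0}(y;\nu')\,\tilde\rho_{N-k,0}(\nu;y)\,dy}\,dx_k,
\]
using the Markov/semigroup structure of $G$ and the definitions in \eqref{020107-22}. A subtlety here is that $\eta_k$ depends on $Z_{k,k-1}$, which is not measurable with respect to the noise that determines $\rho_{k-1,0}$, so one should split the noise into its restriction to $[0,k-1]$, to $[k-1,k]$, and to $[k,N]$, and carry out the $x_k$-integration in two stages (first everything on $[0,k-1]$, giving $\rho_{k-1,0}(x_{k-1};\nu')$, then the single step on $[k-1,k]$).

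The second step is to control, using only the properties guaranteed by Proposition~\ref{p.tk1}, the three ingredients that now appear: (i) the quenched moment $\E_\x|\eta_k|^p$; (ii) the densities $\rho_{k-1,0}$ and $\tilde\rho_{N-k,0}$, which by \eqref{e.mmbdrho} have all positive and negative moments bounded uniformly in the initial measure and in time $\ge 1$; and (iii) the reciprocal normalization $\big(\int \rho\,\tilde\rho\big)^{-1}$, which is bounded above because $\rho,\tilde\rho\ge c>0$ with controlled moments (again \eqref{e.mmbdrho}). By Hölder, the whole expectation is bounded by a product of $L^q(\PP)$-norms of these factors, so everything reduces to a moment bound on $\E_\x|\eta_k|^p$ that is uniform over the noise and over $x_{k-1},x_k$. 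For the small-index terms $k=1$ and $k=N$ one argues directly: $\rho_{k,0}(\cdot;\nu')$ for $k\le 1$ is handled by the explicit one-step heat kernel with noise, which has Gaussian-type tails in the spatial variable, so the $Z_{1,0}$ moments are finite.

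The third and genuinely technical step — which I expect to be the main obstacle — is the uniform moment bound
\[
\EE\Big[\,\sup_{x,y\in\bT}\Big(\tfrac{1}{G_{1,0}(x,y)}\sum_{j\in\Z}|j|^p\,Z_{1,0}(x+j,y)\Big)^{q}\,\Big]<\infty
\]
for suitable $q>1$. The numerator is $\int_{\bT}\cdots$ replaced by a weighted sum over translates; morally $Z_{1,0}(x+j,y)$ is a stochastic heat kernel over one unit of time, which on $\R$ has sub-Gaussian spatial decay $\sim \exp(-c|x+j-y|^2)$ in a quenched-moment sense (this is standard for Itô--Walsh SHE with white noise: moments of $Z_{t,0}(x,y)$ decay Gaussianly in $x-y$, uniformly for $t\in[\tfrac12,1]$, see e.g. the heat-kernel estimates in \cite{davar}), so $\sum_j |j|^p Z_{1,0}(x+j,y)$ has finite moments of all orders uniformly in $x,y\in\bT$; meanwhile $G_{1,0}(x,y)\ge G_{1,0}^{\min}$ with a negative moment bounded by \eqref{e.mmbdrho}-type control (or directly by Jensen/Feynman--Kac lower bounds on the torus). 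Combining the Gaussian tail of the numerator with the inverse-moment bound on $G_{1,0}$ via Cauchy--Schwarz in $\PP$ gives the claim, and then steps one and two upgrade it to \eqref{040107-22} with ${\frak E}_p$ controlled by ${\frak E}_p\lesssim \|\,\sup_{x,y}\E_\x|\eta_1|^p\,\|_{L^{q}(\PP)}\cdot(\text{moment bounds from Proposition~\ref{p.tk1}})$, uniformly in $N,k,\nu,\nu'$.
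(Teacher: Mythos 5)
Your proposal follows essentially the same route as the paper's proof: integrate $\mu_N$ against the single-step observable, which, after integrating out $x_0,\ldots,x_{k-2}$ and $x_{k+1},\ldots,x_N$, exposes a product of three factors measurable with respect to the noise on the disjoint time intervals $[0,k-1]$, $[k-1,k]$, and $[k,N]$; exploit independence to factor the $\PP$-expectation; control the outer factors by the moment bounds \eqref{e.mmbdrho} and the fact that they integrate to one; and reduce to a uniform-in-$x,y$ moment bound on $\big(\inf G_{k,k-1}\big)^{-1}\sum_j|j|^p Z_{k,k-1}(x+j,y)$ via H\"older, using the Gaussian spatial decay of $\EE Z_{k,k-1}^q$ together with negative moments of $\inf G_{k,k-1}$ from \cite[Lemma 4.1]{GK21}. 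Two cosmetic remarks: the paper pulls out $\big(\inf_{z,z'}G_{k,k-1}(z',z)\big)^{-1}$ explicitly to make the three factors cleanly independent before factoring, rather than keeping the normalization $\int \rho\,\tilde\rho$ in the denominator; and the paper only needs $\sup_{x,y}\EE F_k(x,y)<\infty$ (sup of expectation), not the stronger $\EE\sup_{x,y}(\cdot)^q<\infty$ you propose in your final display — the latter would require additional work (e.g.\ a chaining argument) and is unnecessary, since after factoring the $x_k,x_{k-1}$-integral of the other two factors is exactly $1$.
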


\begin{proof}
By the definition we have
\begin{equation}\label{e.Enk2}
\begin{aligned}
E_{N,k}^{(p)}(\nu,\nu')&=\EE \int_{\bT^2} dx_{k}dx_{k-1} \sum_{j}
|j|^pZ_{k,k-1}(x_k+j,x_{k-1}) \notag\\
&
\times \frac{     G_{N,k}(\nu;x_k)  G_{k-1,0}(x_{k-1};\nu')}{\int_{\bT^2} G_{N,k}(\nu;y')
  G_{k,k-1}(y',y) G_{k-1,0}(y;\nu') dy'dy}.
  \end{aligned}
  \end{equation}
  Here we used the simplified notation $G_{t,s}(x;\nu)=\int_{\bT}
  G_{t,s}(x,y)\nu(dy)$ and
  $G_{t,s}(\nu;x)=\int_{\bT}G_{t,s}(y,x)\nu(dy)$.  The above
  expression is bounded from above by 
  \begin{equation}
  \begin{aligned}
& \EE\Big\{\left(\inf_{z,z'}G_{k,k-1}(z',z)\right)^{-1} \int_{\bT^2}dx_{k}dx_{k-1}\sum_{j}
|j|^pZ_{k,k-1}(x_k+j,x_{k-1}) \notag\\
&
\times \frac{  G_{N,k}(\nu;x_k)   G_{k-1,0}(x_{k-1};\nu')}{\int_{\bT^2} G_{N,k}(\nu;y')
    G_{k-1,0}(y;\nu') dy'dy} \Big\}\\
&=\int_{\bT^2}\EE\Big\{F_k(x_k,x_{k-1})\tilde
  \rho_{\rm per}(N,\nu;k,x_k ) \rho_{\rm per} (k-1,x_{k-1};\nu')\Big\} dx_{k}dx_{k-1}.
\end{aligned}
\end{equation}
Here 
\begin{align}
F_k(x_k,x_{k-1}):=\left(\inf_{z,z'}G_{k,k-1}(z',z)\right)^{-1}\left(\sum_{j}
|j|^pZ_{k,k-1}(x_k+j,x_{k-1}) \right).
\end{align}
Note that   $F_k$, $\tilde  \rho_{\rm per}(N,\nu;k,x_k )$ and  $\rho_{\rm per} (k-1,x_{k-1};\nu')$ are
  independent. 
Therefore, we have
\begin{align*}
E_{N,k}^{(p)}(\nu,\nu')\le \int_{\bT^2}\EE F_k(x_k,x_{k-1}) \EE \tilde  \rho_{\rm per}(N,\nu;k,x_k )\EE \rho_{\rm per} (k-1,x_{k-1};\nu') dx_{k}dx_{k-1}.
\end{align*}
For any $q>1$, we conclude by the H\"older inequality
\begin{align*}
&
\EE F_k(x_k,x_{k-1}) =\sum_{j }
|j|^p\EE \Big\{\left(\inf_{z,z'}G_{k,k-1}(z',z)\right)^{-1} Z_{k,k-1}(x_k+j,x_{k-1}) \Big\}\\
&
\le \sum_{j }|j|^p\left\{\EE\left(\inf_{z,z'}G_{k,k-1}(z',z)\right)^{-q}\right\}^{1/q}\left\{\EE 
Z_{k,k-1}^{q'} (x_k+j,x_{k-1}) \right\}^{1/q'},
\end{align*}
with $1/q+1/q'=1$. There exists a constant $C_q>0$ depending only on $q$
and such that
\[
\begin{aligned}
&\EE\left(\inf_{z,z'}G_{k,k-1}(z',z)\right)^{-q} \leq C_{q}, \\
&
\sup_{x',x } \left\{\EE 
Z_{k,k-1}^{q'} (x'+j,x) \right\}^{1/q'}\leq C_{q}\exp\left\{-\frac{j^2}{C_{q}}\right\},
\end{aligned}
\]
see \cite[Lemma 4.1]{GK21} and Lemma~\ref{l.mmZ} below. 
Hence
\begin{align*}
{\frak F}:=\sup_{k\ge1}\sup_{x',x }\EE F_k(x',x) <\infty
\end{align*}
and   $E_{N,k}^{(p)}(\nu,\nu')\leq {\frak F}$.
This  completes the proof of the lemma.
\end{proof}

The following lemma, concerning  the moments estimate of the
propagator of SHE, is quite standard. For completeness sake we present
its proof in Section \ref{appA}.
\begin{lemma}\label{l.mmZ}
For any $p\geq1$, there exists $C_p>0$ such that
\[
\EE Z_{t,0}(x,0)^p \leq \frac{C_p}{t^{p/2}} \exp\left\{-\frac{x^2}{C_pt}\right\}, \quad\quad \mbox{ for all }t\in(0,2], x\in\R. 
\]
\end{lemma}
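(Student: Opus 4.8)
The statement to prove is Lemma~\ref{l.mmZ}, the moment bound $\EE Z_{t,0}(x,0)^p \le C_p t^{-p/2}\exp(-x^2/(C_p t))$ for $t\in(0,2]$. The plan is to use the Feynman--Kac / chaos expansion representation of $Z_{t,0}(x,0)$ together with hypercontractivity, or, more robustly, to set up a direct moment recursion from the mild (Duhamel) form of the SHE. Write $p_t(x)=(2\pi t)^{-1/2}e^{-x^2/(2t)}$ for the heat kernel. The mild formulation of \eqref{e.eqZ} reads
\[
Z_{t,0}(x,0)=p_t(x)+\beta\int_0^t\int_\R p_{t-s}(x-y)\,Z_{s,0}(y,0)\,\xi(s,y)\,ds\,dy.
\]
First I would reduce to an $L^2$ estimate and then bootstrap to general $p$ by hypercontractivity of the Gaussian (Wiener) chaos: since $Z_{t,0}(x,0)$ lives in a fixed (countable) sum of Wiener chaoses, all $L^p$ norms are comparable to the $L^2$ norm up to a $p$-dependent constant, so it suffices to prove $\EE Z_{t,0}(x,0)^2 \le C t^{-1}\exp(-x^2/(Ct))$.

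\textbf{Key steps.} (1) Expand $Z_{t,0}(x,0)$ into its Wiener chaos series $Z_{t,0}(x,0)=\sum_{n\ge0}\beta^n I_n(f_{n,t,x})$, where $f_{n,t,x}$ is the iterated-heat-kernel kernel $f_{n,t,x}(s_1,y_1;\dots;s_n,y_n)=p_{s_1}(y_1)p_{s_2-s_1}(y_2-y_1)\cdots p_{t-s_n}(x-y_n)$ on the simplex $0<s_1<\dots<s_n<t$. By Itô isometry for multiple integrals,
\[
\EE Z_{t,0}(x,0)^2=\sum_{n\ge0}\beta^{2n}\|f_{n,t,x}\|_{L^2}^2.
\]
(2) Compute $\|f_{n,t,x}\|_{L^2}^2$ by doing the spatial Gaussian integrals: integrating out $y_1,\dots,y_n$ against the product of Gaussians yields (Chapman--Kolmogorov for the squared kernel, using $p_a(u)^2=\frac{1}{2\sqrt{\pi a}}\,p_{a/2}(u)$) a bound of the form
\[
\|f_{n,t,x}\|_{L^2}^2 \le p_{t/2}(x)\cdot\frac{1}{(2\sqrt\pi)^{n+1}}\int_{0<s_1<\dots<s_n<t}\frac{ds_1\cdots ds_n}{\sqrt{s_1(s_2-s_1)\cdots(t-s_n)}},
\]
and the simplex integral is a Beta-function product equal to $\dfrac{\pi^{(n+1)/2}\,t^{(n-1)/2}}{\Gamma((n+1)/2)}$. (3) Sum the series: the resulting $\sum_n \beta^{2n} c^n t^{(n-1)/2}/\Gamma((n+1)/2)$ converges for all $t\le 2$ (indeed all $t$), is increasing in $t$, hence bounded by its value at $t=2$, giving $\EE Z_{t,0}(x,0)^2\le C\,p_{t/2}(x)\le C' t^{-1/2}e^{-x^2/(2t)}$. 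This already gives the $t^{-p/2}$-with-the-right-Gaussian-tail shape after hypercontractivity — one only needs to be slightly careful that hypercontractivity multiplies the $L^2$ bound by a constant, preserving the Gaussian factor, and that $t^{-1/2}$ raised to the $p$ gives $t^{-p/2}$; the extra powers of $t$ absorbed into the constant since $t\le 2$. (4) If one prefers to avoid chaos expansions, replace steps (1)--(3) by Gronwall on $H_p(t,x):=\EE|Z_{t,0}(x,0)|^p$: from the mild form and the Burkholder--Davis--Gundy inequality for the stochastic integral, $\|Z_{t,0}(x,0)\|_p^2 \le 2p_t(x)^2 + C_p\beta^2\int_0^t\int_\R p_{t-s}(x-y)^2\|Z_{s,0}(y,0)\|_p^2\,ds\,dy$, and then iterate this inequality; the kernel $p_{t-s}(x-y)^2$ has the integrable $(t-s)^{-1/2}$ singularity needed for the fractional Gronwall argument, and tracking the spatial Gaussian through the convolutions gives the claimed exponential decay in $x$.

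\textbf{Main obstacle.} The routine-but-delicate point is keeping the \emph{spatial Gaussian decay} $e^{-x^2/(C_p t)}$ honest through all the convolutions and through the passage from $p=2$ to general $p$: each Duhamel/convolution step wants to leak a bit of the Gaussian into the constant, so one must organize the bound as "$p_{ct}(x)$ times a purely-$t$ quantity" at every stage (using $p_a * p_b = p_{a+b}$ and $p_a^2 \propto p_{a/2}$ cleanly) rather than estimating crudely. The time-singularity bookkeeping in the simplex integral / fractional Gronwall is standard once set up. Since $t$ is restricted to $(0,2]$ there are no issues with divergence of the chaos series, so the whole estimate is ultimately a finite-constant bound; the content is entirely in the shape $t^{-p/2}e^{-x^2/(C_pt)}$, which is exactly the small-time heat-kernel shape and should survive because the noise is a bounded multiplicative perturbation.
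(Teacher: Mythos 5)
Your approach is genuinely different from the paper's and is in principle viable, but as written it has a real gap regarding the covariance structure of the noise.

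\textbf{Comparison with the paper.} The paper avoids chaos expansions and convolution bookkeeping entirely. It first proves (Lemma~\ref{lmA1}, via Feynman--Kac with a Brownian bridge and mollified noise) that for fixed $t>s$ and $y$, the process $x\mapsto Z_{t,s}(x,y)/q_{t-s}(x-y)$ is \emph{stationary} in $x$, a consequence of the spatial stationarity of the periodic noise. This yields the exact identity $\EE Z_{t,0}(x,0)^p=(q_t(x)/q_t(0))^p\,\EE Z_{t,0}(0,0)^p$, so the Gaussian factor $e^{-px^2/(2t)}$ pops out for free, and the $t^{-p/2}$ comes from the known moment bound $\EE G_{t,0}(0,0)^p$ (via $Z_{t,0}(0,0)\le G_{t,0}(0,0)$ and \cite[Lemma B.1]{GK21}). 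In your approach, the Gaussian shape has to be propagated through every convolution --- exactly the ``main obstacle'' you flag --- whereas the paper's stationarity trick makes that issue disappear.

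\textbf{The gap.} You perform the It\^o isometry / chaos computation as if $\xi$ were ordinary space-time white noise on $\R_+\times\R$. In this paper $\xi$ is white noise on $\R_+\times\bT$ periodically extended to $\R_+\times\R$, so its covariance is $\EE[\xi(s,y)\xi(s',y')]=\delta(s-s')\sum_{j\in\Z}\delta(y-y'-j)$. Consequently, $\EE Z_{t,0}(x,0)^2$ is \emph{not} $\sum_n\beta^{2n}\|f_{n,t,x}\|_{L^2(\Delta_n\times\R^n)}^2$: each pairing contributes additional (positive) cross terms $\int_\R p_a(y)p_a(y+j)\,dy=p_{2a}(j)$ with $j\neq 0$, so the quantity you compute is a \emph{lower} bound, not an upper bound. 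The same issue appears in your Gronwall/BDG alternative. For $t\in(0,2]$ these cross terms are exponentially small in $j$ and can be controlled so that the final bound still has the claimed shape, but this needs to be argued --- e.g.\ by working with the periodized kernels $\tilde f_{n,t,x}(\mathbf s,\mathbf y)=\sum_{\mathbf j\in\Z^n}f_{n,t,x}(\mathbf s,\mathbf y+\mathbf j)$ on $\bT^n$ and showing $\|\tilde f_{n,t,x}\|_{L^2(\bT^n)}^2\le C^{n+1}\|f_{n,t,x}\|_{L^2(\R^n)}^2$ uniformly over the simplex. As it stands the key $L^2$ identity in Step~(1) is simply incorrect for this noise.

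\textbf{A minor imprecision.} Your hypercontractivity remark --- ``since $Z_{t,0}(x,0)$ lives in a fixed (countable) sum of Wiener chaoses, all $L^p$ norms are comparable to the $L^2$ norm'' --- is only true for \emph{finite} chaos sums. For the full chaos series you must apply $\|I_n(f_n)\|_p\le(p-1)^{n/2}\|I_n(f_n)\|_2$ term by term and re-sum the resulting series, checking it still converges (it does for $t\le2$ because the simplex factor $t^{(n-1)/2}/\Gamma((n+1)/2)$ decays super-geometrically). This is standard, but the reduction is not a one-liner.
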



\subsection{Characteristic function at equilibrium}
\label{s.cha}

Recall from Section~\ref{s.polyt} that, there exists a unique probability measure $\pi_\infty$ on the space
${\mathbb D}_c(\bT)$ - continuous probability densities on $\bT$ - that is invariant under
the dynamics of the polymer endpoint process.

  Suppose that $\varrho$ and $\tilde \varrho$ are two independent copies of
${\mathbb D}_c(\bT)$-valued random fields, distributed according to
$\pi_\infty$. They are also assumed to be independent of the noise
$\xi$. We will use $\E_\varrho,\E_{\tilde{\varrho}}$ to denote the
expectation with respect to them respectively. From \cite[Theorem
2.3]{GK21} and \eqref{e.mmbdrho}, we know that \eqref{e.mmbdrho} also
holds for $\varrho$, i.e. for any $p\geq 1$ we have 
\begin{equation}\label{e.mmbdrho1}
{\frak R}_p:=\E_\varrho \sup_x \{\varrho(x)^p+\varrho(x)^{-p}\} <+\infty.
\end{equation}
 
Define
\begin{align}
\label{psiN}
\tilde\psi_N(\theta)
=\E_{\varrho}\E_{\tilde \varrho} \EE \int_{\bT^{N+1}} \E_{\x}\exp\left\{\sum_{k=1}^N\frac{i\theta
  \eta_k}{\sqrt{N}}\right\} \mu_N(\x;\tilde\vrho,\vrho) d\x_{0,N},\notag
\end{align}
and recall that 
\[
\psi_N(\theta)=\EE \int_{\bT^{N+1}} \E_{\x}\exp\left\{\sum_{k=1}^N\frac{i\theta
  \eta_k}{\sqrt{N}}\right\} \mu_N(\x;{\rm m},\delta_0)d\x_{0,N}.
\]
The only difference between $\psi_N$ and $\tilde{\psi}_N$ comes from the distributions of the starting and ending points of the directed polymer on the cylinder: for $\psi_N$, the starting point is the origin and the ending point is ``free'' and distributed according to the Lebesgue measure on $\bT$, while for $\tilde{\psi}_N$, the starting and ending points are sampled independently from the stationary distribution.

The purpose of the present section is to show the following proposition, which reduces the proof of central limit theorem to the stationary setting. 
\begin{proposition}
\label{prop010107-22}
For any $\theta\in\R$ we have
\begin{equation}
\lim_{N\to\infty}[\tilde\psi_N(\theta)- \psi_N(\theta)]=0.
\end{equation}
\end{proposition}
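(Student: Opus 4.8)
\textbf{Proof plan for Proposition~\ref{prop010107-22}.}

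The plan is to compare the two characteristic functions by writing both in terms of the measures $\mu_N(\x;\cdot,\cdot)$ and exploiting that $\mu_N$ factorizes through the endpoint densities. The key algebraic observation is that for any measures $f,g$ on $\bT$,
\[
\mu_N(\x;f,g)=\frac{f(x_N)g(x_0)\prod_{k=1}^NG_{k,k-1}(x_k,x_{k-1})}{G_{N,0}(f,g)},
\]
so if we integrate out the bulk variables $x_1,\dots,x_{N-1}$ (against $\prod_k G$) the dependence on $f,g$ enters only through the pair $(x_0,x_N)$ and the normalization $G_{N,0}(f,g)$. The strategy is to split the time interval: fix a large time window $[m,N-m]$ in the middle (with $m$ eventually sent to infinity after $N$), and condition on $x_m$ and $x_{N-m}$. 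Proposition~\ref{p.tk1} gives that $\tilde\rho_{N,N-m}(\nu;\cdot)$ and $\rho_{m,0}(\cdot;\nu')$ lose all dependence on the boundary data $\nu,\nu'$ at exponential rate in $m$, uniformly in $\nu,\nu'\in\mathcal M_1(\bT)$ and in the noise, up to an $L^p(\PP)$-error of size $Ce^{-\lambda m}$. Therefore, replacing the boundary densities $({\rm m},\delta_0)$ by $(\tilde\vrho,\vrho)$ changes the conditional law of the middle block $(x_m,\dots,x_{N-m})$, and hence of the increments $\eta_{m+1},\dots,\eta_{N-m}$, by an amount controlled by $e^{-\lambda m}$.

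The steps I would carry out are: (i) Write
\[
\psi_N(\theta)-\tilde\psi_N(\theta)=\EE\int\Big(\prod_{k=1}^N\E_\x e^{i\theta\eta_k/\sqrt N}\Big)\big[\mu_N(\x;{\rm m},\delta_0)-\E_\vrho\E_{\tilde\vrho}\mu_N(\x;\tilde\vrho,\vrho)\big]\,d\x_{0,N},
\]
and note the product $\prod_k\E_\x e^{i\theta\eta_k/\sqrt N}$ has modulus $\le1$, so everything reduces to bounding the total variation distance between the two averaged joint densities of $\x$. (ii) Truncate the product at the two ends: remove the factors $k\le m$ and $k>N-m$ using that $|\E_\x e^{i\theta\eta_k/\sqrt N}-1|\le \frac{|\theta|}{\sqrt N}\E_\x|\eta_k|$, and control $\E_\x|\eta_k|$ in $L^1(\mu_N\otimes\PP)$ by Lemma~\ref{lm010107-22} with $p>1$; this introduces an error $O(m/\sqrt N)$, negligible once $m=o(\sqrt N)$. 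The same truncation must be done for $\tilde\psi_N$; here one needs the analog of Lemma~\ref{lm010107-22} with $({\rm m},\delta_0)$ replaced by $(\tilde\vrho,\vrho)$, which follows from the moment bound \eqref{e.mmbdrho1} for $\vrho$ exactly as in the proof of Lemma~\ref{lm010107-22}, since the argument only used uniformity over $\nu,\nu'\in\mathcal M_1(\bT)$ and the product structure. (iii) For the remaining middle block, factor $\mu_N$ as
\[
\mu_N(\x;f,g)=\frac{G_{N,N-m}(f;x_{N-m})\,G_{m,0}(x_m;g)}{\text{(normalizer)}}\times\big(\text{law of }x_{m+1},\dots,x_{N-m-1}\text{ given }x_m,x_{N-m}\big),
\]
so the boundary data enter only via the two densities $\tilde\rho_{N,N-m}(f;x_{N-m})=G_{N,N-m}(f;x_{N-m})/(\cdots)$ and $\rho_{m,0}(x_m;g)$ and a scalar normalization. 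Apply Proposition~\ref{p.tk1}: $\tilde\rho_{N,N-m}(f;\cdot)$ and $\rho_{m,0}(\cdot;g)$ are each within $Ce^{-\lambda m}$ in $L^p(\PP)$ of quantities independent of $f,g$, uniformly in $f,g$; combined with the uniform two-sided bounds \eqref{e.mmbdrho} (and \eqref{e.mmbdrho1}) on these densities to handle the normalization ratios, this yields that the difference of the two averaged middle-block laws, integrated against the bounded functional $\prod_{k=m+1}^{N-m}\E_\x e^{i\theta\eta_k/\sqrt N}$ and against the remaining bulk density, is $O(e^{-\lambda m})$ uniformly in $N$. (iv) Conclude: $|\psi_N(\theta)-\tilde\psi_N(\theta)|\le C\big(m/\sqrt N+e^{-\lambda m}\big)$; letting $N\to\infty$ then $m\to\infty$ gives the claim.

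The main obstacle is step (iii): one must carefully handle the \emph{normalization} factors $G_{N,0}(f,g)$ and the intermediate normalizers, which depend on $f,g$ and do not obviously factorize. The trick is that each such ratio can be rewritten as an integral of $\rho_{m,0}(\cdot;g)$ or $\tilde\rho_{N,N-m}(f;\cdot)$ against the (boundary-independent) middle kernel, so differences of normalizers are again controlled by the exponential mixing of these densities, provided one has the uniform moment bounds of \eqref{e.mmbdrho} to keep the ratios bounded and to turn $L^p$-closeness into closeness of ratios (via the elementary estimate $|a/b-a'/b'|\le (|a-a'|+|a|\,|b-b'|/|b'|)/|b|$ together with Hölder). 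Keeping all these estimates uniform in $N$, and making sure the window $m$ can be chosen as a single scale (e.g. $m=\log N$) that makes both error terms vanish, is the part requiring the most care, but it is entirely powered by Proposition~\ref{p.tk1}, Lemma~\ref{lm010107-22}, and its stationary analog.
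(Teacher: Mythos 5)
Your proposal follows essentially the same route as the paper's proof: truncate the product away from the boundary (window of size $m=k_N$), control the truncation error as $O(k_N/\sqrt N)$ via Lemma~\ref{lm010107-22}, and then for the middle block exploit the exponential forgetting of boundary data from Proposition~\ref{p.tk1} (together with the two-sided moment bounds \eqref{e.mmbdrho}, \eqref{e.mmbdrho1} to handle the normalization ratios) to swap $({\rm m},\delta_0)$ for $(\tilde\vrho,\vrho)$ with an $O(e^{-\lambda k_N})$ error. The only cosmetic difference is that the paper makes the boundary swap in two explicit stages via the intermediate $\psi_{N,o}^{(1)}$ and bounds the resulting terms ${\rm I}_N,{\rm II}_N$ separately, but the ingredients, the error scales, and the final balancing of $k_N$ are the same.
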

\begin{proof}
Consider a sequence $\{k_N\}_N$ such that    $k_N\to\infty$ and $\frac{k_N}{\sqrt{N}}\to0$. Define
\begin{align}
\label{psiN0}
&
 \psi_{N,o}(\theta):= \EE \int_{\bT^{N+1}} \E_{\x}\exp\left\{\sum_{k=k_N}^{N-k_N}\frac{i\theta
  \eta_k}{\sqrt{N}}\right\} \mu_N(\x;{\rm m},\delta_0)d\x_{0,N} ,\\
&
\tilde \psi_{N,o}(\theta):= \E_{\varrho}\E_{\tilde \varrho}\EE \int_{\bT^{N+1}} \E_{\x}\exp\left\{\sum_{k=k_N}^{N-k_N}\frac{i\theta
  \eta_k}{\sqrt{N}}\right\} \mu_N(\x;\tilde\vrho,\vrho) d\x_{0,N}  .\notag
\end{align}
We have
\begin{align*}
&
 \Big|\psi_{N,o}(\theta)-\psi_{N}(\theta)\Big|\\
&
\le \EE \int_{\bT^{N+1}} \E_{\x}\Big|\exp\left\{\sum_{k=k_N}^{N-k_N}\frac{i\theta
  \eta_k}{\sqrt{N}}\right\} -\exp\left\{\sum_{k=1}^{N}\frac{i\theta
  \eta_k}{\sqrt{N}}\right\} \Big|\mu_N(\x;{\rm m},\delta_0)d\x_{0,N}  \\
&
\le   \EE \int_{\bT^{N+1}} \left(\sum_{k=1}^{k_N-1}+\sum_{k=N-k_N+1}^N\right)\E_\x\Big|\frac{\theta
  \eta_k}{\sqrt{N}}   \Big|\mu_N(\x;{\rm m},\delta_0)d\x_{0,N}  \\
&
 \le
  \frac{2k_N|\theta|\sqrt{{\frak E}_2}}{\sqrt{N}}\to0,
\end{align*}
as $N\to\infty$. In the last step, we have applied Lemma~\ref{lm010107-22} with ${\frak E}_2$ defined in \eqref{040107-22}.  By the same proof, we have $|\tilde \psi_{N,o}(\theta)-\tilde\psi_{N}(\theta)|\to0$. Thus, to prove the proposition, it suffices to show that
\begin{align}
\label{010407-22}
 \Big|\tilde \psi_{N,o}(\theta)- \psi_{N,o}(\theta)\Big|\to0.
\end{align}

Recall that for any $t>s$ and $\nu\in \mathcal{M}_1(\bT)$, $\rho_{\rm
  per}(t,\cdot;s,\nu)$ and $\tilde{\rho}_{\rm per}(t,\nu;s,\cdot)$
were defined in \eqref{020107-22}. \blue{We also have $\rho_{\rm
  per}(t,\cdot;\nu)=\rho_{\rm
  per}(t,\cdot;0,\nu)$}.
In $\mu_N(\x;{\rm m},\delta_0)$, we integrate out the variables 
\[
x_0,\ldots,x_{k_N-2},x_{N-k_N+1},\ldots,x_N
\] to obtain
\[
\begin{aligned}
&\int\mu_N(\x;{\rm m},\delta_0) d\x_{0,k_N-2}d\x_{N-k_N+1,N}\\
&=\frac{\tilde{\rho}_{\rm per}(N,{\rm m}; N-k_N ,x_{N-k_N})\left(\prod_{k=k_N}^{N-k_N}
  G_{k,k-1}(x_k,x_{k-1})\right)\rho_{\rm per}(k_{N}-1  ,x_{k_N-1};\delta_0)}{\G_{N-k_N,k_N}({\rm m},\delta_0)},
\end{aligned}
\]
with 
\begin{equation}\label{e.defGG}
\G_{m_2,m_1}(\nu,\nu'):=\int \tilde{\rho}_{\rm per}(N,\nu;m_2,
x)G_{m_2,m_1-1}(x,y)\rho_{\rm per}(m_1-1 ,y;\nu')dxdy.
\end{equation}
This leads to the following expression
\begin{equation}\label{e.7181}
\begin{aligned}
  \psi_{N,o}(\theta)
=\EE \int&\tilde{\rho}_{\rm per}(N,{\rm m}; N-k_N ,x_{N-k_N})\frac{\prod_{k=k_N}^{N-k_N}
  G_{k,k-1}(x_k,x_{k-1}) }{\G_{N-k_N,k_N}({\rm m},\delta_0)}\\
  &\times \rho_{\rm per}(k_{N}-1  ,x_{k_N-1};\delta_0)\E_{\x}\exp\left\{\sum_{k=k_N}^{N-k_N}\frac{i\theta
  \eta_k}{\sqrt{N}}\right\}  d\x_{k_N-1,N-k_N}.
\end{aligned}
\end{equation}
For brevity sake we write $d\x_{m,M}=dx_m\ldots dx_M$ for any $m\le M$.
With the above notations, we can also write $\tilde{\psi}_{N,o}$ as 
\begin{equation}\label{e.7182}
\begin{aligned}
  \tilde{\psi}_{N,o}(\theta)
=\E_{\varrho}\E_{\tilde{\varrho}}\EE \int&\tilde{\rho}_{\rm per}(N,\tilde{\varrho}; N-k_N ,x_{N-k_N})\frac{\prod_{k=k_N}^{N-k_N}
  G_{k,k-1}(x_k,x_{k-1})}{\G_{N-k_N,k_N}(\tilde{\varrho},\varrho)}\\
  &\times \rho_{\rm per}(k_{N}-1 ,x_{k_N-1};\varrho)\E_{\x}\exp\left\{\sum_{k=k_N}^{N-k_N}\frac{i\theta
  \eta_k}{\sqrt{N}}\right\}  d\x_{k_N-1,N-k_N}.
\end{aligned}
\end{equation}

The idea is that, since $k_N\gg1$, we expect $\rho_{\rm
  per}(k_{N-1},\cdot;\delta_0)$ and $\tilde{\rho}_{\rm per}(N,{\rm m}; N-k_N,\cdot)$ to be close to the stationary distribution, so that in the expression of $\psi_{N,o}$, we can    first ``replace'' $\delta_0$ by $\varrho$, then ``replace'' ${\rm m}$ by $\tilde{\varrho}$. In this way, we reach at the expression of $\tilde{\psi}_{N,o}$

We define an intermediate version:
\begin{equation}\label{e.7183}
\begin{aligned}
  \psi_{N,o}^{(1)}(\theta)
=\E_{\varrho}\EE \int&\tilde{\rho}_{\rm per}(N,{\rm m}; N-k_N, x_{N-k_N})\frac{\prod_{k=k_N}^{N-k_N}
  G_{k,k-1}(x_k,x_{k-1}) }{\G_{N-k_N,k_N}({\rm m},\varrho)}\\
  &\times \rho_{\rm per}(k_{N}-1 ,x_{k_N-1};\varrho)\E_{\x}\exp\left\{\sum_{k=k_N}^{N-k_N}\frac{i\theta
  \eta_k}{\sqrt{N}}\right\} d\x_{k_N-1,N-k_N}.
\end{aligned}
\end{equation}
Compare \eqref{e.7183} with \eqref{e.7181}, the only difference comes from replacing $\delta_0$ with the stationary measure $\varrho$. Next we write 
\[
\psi_{N,o}(\theta)- \psi^{(1)}_{N,o}(\theta)={\rm I}_N+{\rm II}_N,
\] where
\begin{equation}
\begin{aligned}
{\rm I}_N
=\E_\varrho\EE \int&\tilde{\rho}_{\rm per}(N,{\rm m}; N-k_N, x_{N-k_N})\frac{\prod_{k=k_N}^{N-k_N}
  G_{k,k-1}(x_k,x_{k-1})}{\G_{N-k_N,k_N}({\rm m},\delta_0)}\\
  &\times \mathcal{E}_1(k_N-1,x_{k_N-1};\delta_0,\varrho)\E_{\x}\exp\left\{\sum_{k=k_N}^{N-k_N}\frac{i\theta
  \eta_k}{\sqrt{N}}\right\}  d\x_{k_N-1,N-k_N},
\end{aligned}
\end{equation}
with 
\begin{equation}\label{e.defE1}
\mathcal{E}_1(k_N-1,\cdot;\delta_0,\varrho):=\rho_{\rm per}(k_N-1,\cdot;\delta_0)-\rho_{\rm per}(k_N-1,\cdot; \varrho),
\end{equation}
and 
\begin{equation}
\begin{aligned}
{\rm II}_N
=\E_{\varrho}\EE \int&\tilde{\rho}_{\rm per}(N,{\rm m}; N-k_N, x_{N-k_N})\left(\prod_{k=k_N}^{N-k_N}
  G_{k,k-1}(x_k,x_{k-1})\right)\rho_{\rm per}(k_{N}-1 ,x_{k_N-1};\varrho)\\
  &\times \mathcal{E}_2(k_N-1;\delta_0,\varrho) \E_{\x}\exp\left\{\sum_{k=k_N}^{N-k_N}\frac{i\theta
  \eta_k}{\sqrt{N}}\right\} d\x_{k_N-1,N-k_N},
\end{aligned}
\end{equation}
with 
\[
\mathcal{E}_2(k_N-1;\delta_0,\varrho):= \G_{N-k_N,k_N}({\rm m},\delta_0)^{-1}-  \G_{N-k_N,k_N}({\rm m},\varrho)^{-1}.
\]
In the following, we will estimate ${\rm I}_N$ and ${\rm II}_N$ separately. 

\emph{Estimates on ${\rm I}_N$.} First we note that the term $| \E_{\x}\exp\left\{\sum_{k=k_N}^{N-k_N}\frac{i\theta
  \eta_k}{\sqrt{N}}\right\} |\leq 1$. Secondly, by the definition of $\G$ in \eqref{e.defGG}, it is straightforward to check that
\[
\begin{aligned}
&\frac{\tilde{\rho}_{\rm per}(N,{\rm m}; N-k_N, x_{N-k_N}) }{\G_{N-k_N,k_N}({\rm m},\delta_0)} \\
  &\leq \frac{\sup_x \tilde{\rho}_{\rm per}(N,{\rm m}; N-k_N,
    x)}{\inf_x \tilde{\rho}_{\rm per}(N,{\rm m}; N-k_N, x)\inf_x
    \rho_{\rm per}(k_N-1 ,x;\delta_0)} \times  \frac{1}{G_{N-k_N,k_N-1}({\rm m},{\rm m})}.
    \end{aligned}
  \]
  Here we used the simplified notation $G_{t,s}(\nu,\nu')=\int G_{t,s}(x,y)\nu(dx)\nu'(dy)$.
Thus, we have 
\[
\begin{aligned}
|{\rm I}_N |\leq &\E_\varrho\EE \left[\frac{\sup_x \tilde{\rho}_{\rm per}(N,{\rm m}; N-k_N,
    x)}{\inf_x \tilde{\rho}_{\rm per}(N,{\rm m}; N-k_N, x)\inf_x
    \rho_{\rm per}(k_N-1 ,x;\delta_0)} \sup_x |\mathcal{E}_1(k_N-1,x;\delta_0,\varrho)|\right]\\
\leq & \sqrt{\E_\varrho\EE \left(\frac{\sup_x \tilde{\rho}_{\rm per}(N,{\rm m}; N-k_N,
    x)}{\inf_x \tilde{\rho}_{\rm per}(N,{\rm m}; N-k_N, x)\inf_x
    \rho_{\rm per}(k_N-1 ,x;\delta_0)} \right)^2 }\\
&\times\sqrt{\E_\varrho\EE \sup_x |\mathcal{E}_1(k_N-1,x;\delta_0,\varrho)|^2}.
\end{aligned}
\]
According to Proposition~\ref{p.tk1}, there exist constants $C,\lambda>0$ such that the above expression is bounded by 
\[
|{\rm I}_N|\leq Ce^{-\lambda k_N}.
\]

\emph{Estimates on ${\rm II}_N$.} The proof is similar to that of ${\rm I}_N$. By \eqref{e.defGG} and the definition of $\mathcal{E}_1$ in \eqref{e.defE1}, we have 
\[
\begin{aligned}
&\mathcal{E}_2(k_N-1;\delta_0,\varrho)\\
&=-\int \tilde{\rho}_{\rm per}(N,{\rm m}; N-k_N, x_{N-k_N}) \frac{
  G_{N-k_N,k_N-1}(x_{N-k_N},x_{k_N-1})}{\G_{N-k_N,k_N}({\rm
    m},\delta_0)\G_{N-k_N,k_N}({\rm m},\varrho)}\\
&
\times \mathcal{E}_1(k_N-1,x_{k_N-1};\delta_0,\varrho)dx_{N-k_N}dx_{k_N-1},
\end{aligned}
\]
which implies that 
\[
\begin{aligned}
  &|\mathcal{E}_2(k_N-1;\delta_0,\varrho)| \\
  &
  \leq \frac{\sup_x
  \tilde{\rho}_{\rm per}(N,{\rm m}; N-k_N, x)
  \sup_x|\mathcal{E}_1(k_N-1,x;\delta_0,\varrho)| }{(\inf_x
  \tilde{\rho}_{\rm per}(N,{\rm m}; N-k_N, x) )^2\inf_x \rho_{\rm per}(k_N-1, x;\delta_0)\inf_x \rho_{\rm per}(k_N-1, x;\varrho)}\\
&\times \frac{1}{ G_{N-k_N,k_N-1}({\rm m},{\rm m})}.
\end{aligned}
\]
This leads to
\begin{align*}
  &|{\rm II}_N|\\
  &
    \leq\E_{\varrho}\EE\frac{(\sup_x \tilde{\rho}_{\rm
    per}(N,{\rm m}, N-k_N ,x))^2\sup_x\rho_{\rm per}(k_{N}-1 ,x;\varrho) \sup_x|\mathcal{E}_1(k_N-1,x;\delta_0,\varrho)| }{(\inf_x \tilde{\rho}_{\rm
    per}(N,{\rm m}, N-k_N ,x))^2\inf_x \rho_{\rm per}(k_N-1
  ,x;\delta_0)\inf_x \rho_{\rm per}(k_N-1 ,x;\varrho)}.
\end{align*}
Applying H\"older inequality as before, we also obtain that  
\[
|{\rm II}_N|\leq Ce^{-\lambda k_N},\quad N=1,2,\ldots.
\]
Thus, we have 
\[
|\psi_{N,o}(\theta)- \psi^{(1)}_{N,o}(\theta)|\leq Ce^{-\lambda k_N}\to0
\]
as $N\to\infty$.

It remains to show that $\tilde{\psi}_{N,o}(\theta)- \psi^{(1)}_{N,o}(\theta)\to0$ as $N\to\infty$. Comparing \eqref{e.7182} and \eqref{e.7183}, the only difference is  ${\rm m}$ being replaced by $\tilde{\varrho}$. By following the same proof for $\psi_{N,o}(\theta)- \psi^{(1)}_{N,o}(\theta)$ verbatim, we conclude the proof of the proposition.
\end{proof}

\subsection{Construction of the path measure}
\label{sec4.3}

Recall that by the construction of the Markov chain in
Section~\ref{s.markov}, the study of the winding number $W_N$, see \eqref{WT}, or equivalently $\lf w_N\rf$,  reduces to that of $Y_N=\sum_{k=1}^N \eta_k$:
\[
\hat{\Pb}_N[\lf w_N \rf=j]=\int_{\bT^{N+1}}\Pb_{\mathbf{x}}[Y_N=j] \mu_N(\x;{\rm m},\delta_0) d\x_{0,N}, \quad\quad \mbox{ for all } j\in \Z.
\]
By the result in Section~\ref{s.cha}, to study the law of $\lf w_N \rf $, we can further replace $\mu_N(\x;{\rm m},\delta_0)$ in the above expression by the stationary density $\mu_N(\x;\tilde{\varrho},\varrho)$. In this section, we construct a path measure to realize $\{\eta_k\}_{k\in \Z}$ as a sequence of stationary random variables, and the proof of the central limit theorem for $\lf w_N\rf $ reduces to that of $\sum_{k=1}^N \eta_k$.

The space $\Z^\Z$ consists of all functions $\sigma:\Z\to\Z$.
For any $k\in\Z$, we denote by $\eta_k: \Z^\Z\to\Z$ the $k$-th coordinate
map, i.e. $\eta_k(\sigma):=\sigma(k)$. Recall that
for any $f,g\in \mathbb{D}_c(\bT)$, we have defined 
\begin{equation*} 
\mu_N(\x;f,g )=\frac{f(x_N)\prod_{k=1}^N
  G_{k,k-1}(x_k,x_{k-1})g(x_0)}{\int_{\bT^{N+1}}f(x_N')\prod_{k=1}^N
  G_{k,k-1}(x_k',x_{k-1}')g(x_0')   d\x_{0,N}'}.
\end{equation*}
In the following, we construct a probability measure ${\cal P}$ on $\Z^\Z$   such that 
\begin{itemize}
\item[1)] for each
$N\ge1$ and $j_1,\ldots,j_N\in\Z$, we have
\begin{align}
\label{010607-22}
&{\cal P}\Big[\eta_1=j_1,\ldots,\eta_N=j_N\Big]\\
&
:=\E_{\vrho}\E_{\tilde\vrho}\EE \int_{\bT^{N+1}}  \bP_{\x}\Big[\eta_1=j_1,\ldots,\eta_N=j_N\Big]
   \mu_N(\x;\tilde\vrho,\vrho) d\x_{0,N} 
  \notag\\
&
=\E_{\vrho}\E_{\tilde\vrho}\EE \int_{\bT^{N+1}}   \prod_{k=1}^N\frac{Z_{k,k-1}(j_k+x_k,x_{k-1})}{G_{k,k-1}(x_k,x_{k-1})}
   \mu_N(\x;\tilde\vrho,\vrho) d\x_{0,N}
             \notag
\end{align}
\item[2)] for any $\ell\in\Z$, we have
\begin{align}
\label{020607-22}
{\cal P}\Big[\eta_1=j_1,\ldots,\eta_N=j_N\Big]={\cal P}\Big[\eta_{\ell+1}=j_1,\ldots,\eta_{\ell+N}=j_N\Big].
\end{align}
\end{itemize}
This is done as follows.  First, we define the family of measures $({\cal P}_N)_{N\ge1}$ on
$\Z^N$ by \eqref{010607-22}. They induce a finite additive set function ${\cal P}$ on the
algebra ${\cal C}$ of cylindrical subsets of   $\Z^\Z$.  To
show that ${\cal P}$ extends to the $\si$-algebra $\si({\cal C})$, it
suffices to prove the following consistency condition: for each
$N\ge1$, \blue{$\ell\ge1$} and $j_1,\ldots,j_N\in\Z$,
\begin{align}
\label{cons}
&
              \blue{ {\cal P}\Big[\eta_1=j_1,\ldots,\eta_N=j_N\Big]}\\
  &
    =\E_{\vrho}\E_{\tilde\vrho}\EE \int  \bP_{\x}\Big[\eta_1=j_1,\ldots,\eta_N=j_N\Big]
   \mu_N(\x;\tilde\vrho,\vrho) d\x_{0,N} \notag\\
&
=\E_{\vrho}\E_{\tilde\vrho}\EE \int 
  \sum_{j_{N+1},\ldots,j_{N+\ell}}\bP_{\x}\Big[\eta_1=j_1,\ldots,\eta_N=j_N,
  \eta_{N+1}=j_{N+1},\ldots, \eta_{N+\ell}=j_{N+\ell}\Big]\notag\\
&
\times 
                                                                     \mu_{N+\ell}(\x;\tilde\vrho,\vrho) d\x_{0,N+\ell} \notag\\
  &
  \blue{=\sum_{j_{N+1},\ldots,j_{N+\ell}} {\cal P}\Big[\eta_1=j_1,\ldots,\eta_N=j_N,
  \eta_{N+1}=j_{N+1},\ldots, \eta_{N+\ell}=j_{N+\ell}\Big]}. \notag
\end{align}
The right hand side of \eqref{cons} equals
\begin{align*}
&\E_{\vrho}\E_{\tilde\vrho}\EE \int 
  \sum_{j_{N+1},\ldots,j_{N+\ell}}\prod_{k=1}^{N+\ell}\frac{Z_{k,k-1}(j_k+x_k,x_{k-1})}{G_{k,k-1}(x_k,x_{k-1})}\notag\\
&
\times 
   \frac{\prod_{k=1}^{N+\ell}
  G_{k,k-1}(x_k,x_{k-1})  \vrho(x_0)\tilde\vrho(x_{N+\ell})}{\int\prod_{k=1}^{N+\ell}
  G_{k,k-1}(x_k',x_{k-1}')\vrho(x_0')\tilde\vrho(x_{N+\ell}') d\x_{0,N+\ell}'}d\x_{0,N+\ell}
\end{align*}
\begin{align*}
& =\E_{\vrho}\E_{\tilde\vrho}\EE \int d\x_{0,N}
   \prod_{k=1}^{N}\frac{Z_{k,k-1}(j_k+x_k,x_{k-1})}{G_{k,k-1}(x_k,x_{k-1})}\notag\\
&
\times 
   \frac{\tilde \rho_{\rm per}(  N+\ell,\tilde\vrho;N,x_N)\left(\prod_{k=1}^{N}
  G_{k,k-1}(x_k,x_{k-1})  \vrho(x_0) \right) }{\int \prod_{k=1}^{N}
  G_{k,k-1}(x_k',x_{k-1}')\vrho(x_0') \tilde \rho_{\rm per}(  N+\ell, \tilde\vrho;N,x_N') d\x_{0,N}'}.
\end{align*}
Here $\tilde \rho_{\rm per}(N+\ell,\tilde\vrho;N,\cdot) $ is the reverse time,
polymer endpoint process
starting at stationarity, see the definition of $\tilde{\rho}_{\rm per}$ in
\eqref{020107-22}. So we know that $\tilde{\rho}_{\rm per}(N+\ell,\tilde\vrho;N,\cdot)$ has the same law as $\tilde{\varrho}$ and is independent of $\varrho$ and the random environment in the interval $[0,N]$.  We can therefore write that the right
hand side of \eqref{cons} equals
\begin{align*}
&\E_{\vrho}\E_{\tilde\vrho}\EE \int d\x_{0,N}
   \prod_{k=1}^{N}\frac{Z_{k,k-1}(j_k+x_k,x_{k-1})}{G_{k,k-1}(x_k,x_{k-1})}\notag\\
&
\times 
   \frac{ \prod_{k=1}^{N}
  G_{k,k-1}(x_k,x_{k-1})\tilde\vrho(x_N) \vrho(x_0)   }{\int \prod_{k=1}^{N}
  G_{k,k-1}(x_k',x_{k-1}')\vrho(x_0') \tilde\vrho(x_N')  d\x_{0,N}'},
\end{align*}
which proves  \eqref{cons}. The argument for  \eqref{020607-22} 
is similar (for $\ell\geq 1$). This way we construct a stationary measure ${\cal P}$ on
$\Z^{\N}$. Its extension to $\Z^\Z$ is standard.

From now on, we use $E$ to denote the expectation with respect to ${\cal P}$.
  \begin{proposition}
    \label{prop011207-22}
    We have
    \begin{align}
      &E \eta_j=0, \label{021207-22}\\
      &E \eta_j^2<\infty, \qquad \mbox{for all $j\in\Z$.}\label{021207-22a}
    \end{align} 
    \end{proposition}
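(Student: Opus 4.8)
The plan is to establish \eqref{021207-22a} first, since it is essentially immediate from Lemma~\ref{lm010107-22}, and then to derive \eqref{021207-22} from a symmetry argument. For the second moment bound, observe that by the definition \eqref{010607-22} of ${\cal P}$ and the fact that, conditionally on $\x$, the $\eta_k$ are independent with law \eqref{e.laweta}, we have
\[
E \eta_j^2 = \E_\varrho \E_{\tilde\varrho}\EE\int_{\bT^{N+1}} \bigl(\E_\x \eta_j^2\bigr)\,\mu_N(\x;\tilde\varrho,\varrho)\,d\x_{0,N}
\]
for any $N\ge j$. The right-hand side is exactly $E_{N,j}^{(2)}(\tilde\varrho,\varrho)$ averaged over the stationary measures $\varrho,\tilde\varrho$; since $\varrho,\tilde\varrho$ are probability densities on $\bT$, they are in particular elements of ${\cal M}_1(\bT)$ after absolutely-continuous identification, so by taking the supremum in Lemma~\ref{lm010107-22} (with $p=2>1$) we get $E\eta_j^2\le {\frak E}_2<\infty$. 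This proves \eqref{021207-22a}.

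For \eqref{021207-22} the key observation is a spatial reflection symmetry of the white noise. The law of the space-time white noise $\xi(t,x)$ on $\R_+\times\bT$ is invariant under the reflection $x\mapsto -x$ (equivalently $x\mapsto 1-x$ on $\bT$). Under this reflection the propagator transforms as $Z_{k,k-1}(x_k+j,x_{k-1})\mapsto Z_{k,k-1}(-x_k-j,-x_{k-1})$, which, using the periodicity \eqref{Zts} and relabelling, has the same law as $Z_{k,k-1}(x_k'-j,x_{k-1}')$ for the reflected fractional parts $x_k'=-x_k \bmod 1$; likewise $G_{k,k-1}(x_k,x_{k-1})\mapsto G_{k,k-1}(x_k',x_{k-1}')$ in law, and the stationary measures $\varrho,\tilde\varrho$ (being laws of exponentials of Brownian bridges, cf. Proposition~\ref{p.tk1}) are invariant in law under $x\mapsto -x$. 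Tracking these through \eqref{e.laweta} and \eqref{010607-22}, one sees that $(\eta_1,\ldots,\eta_N)$ under ${\cal P}$ has the same law as $(-\eta_1,\ldots,-\eta_N)$; hence $E\eta_j = -E\eta_j$, and since $E|\eta_j|<\infty$ by the $p=2$ case (or directly the $p=1$ case of Lemma~\ref{lm010107-22}), we conclude $E\eta_j=0$.

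The main obstacle is making the reflection symmetry argument fully rigorous at the level of the Ito--Walsh solution $Z_{t,s}$: one must check that the pathwise construction of $Z$ intertwines correctly with the reflection of $\xi$, i.e. that $Z_{t,s}(-x,-y)$ built from $\xi(\cdot,-\cdot)$ agrees in law with $Z_{t,s}(x,y)$ built from $\xi$. This follows because the mild-solution/chaos-expansion representation of $Z$ is covariant under the orthogonal change of variables in the spatial integration, and the reflection is measure-preserving for both the heat kernel and white noise, but it should be stated as a lemma (or attributed to the symmetry of the SHE already used implicitly in \cite{GK21}). Once that covariance is in hand, the rest of \eqref{021207-22} is bookkeeping, and \eqref{021207-22a} needs nothing beyond Lemma~\ref{lm010107-22}. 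An alternative route to \eqref{021207-22} that avoids the reflection entirely is to note that $\EE[Z_{k,k-1}(x_k+j,x_{k-1})]$ equals the heat kernel $p_1(x_k+j-x_{k-1})$, which is symmetric in $j\mapsto -j$ after summing over the periodic images, and to combine this with the independence structure of $\mu_N$ in the stationary regime; I would present the reflection argument as the primary one since it is cleaner.
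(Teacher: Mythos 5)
Your proof of \eqref{021207-22a} is exactly the paper's: it is a direct appeal to Lemma~\ref{lm010107-22} with $p=2$, plus the observation that $\varrho,\tilde\varrho$ take values in $\mathcal M_1(\bT)$ so the uniform constant $\mathfrak E_2$ applies.

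For \eqref{021207-22} you take a genuinely different route. The paper does not apply the reflection symmetry to the stationary sequence directly; instead it introduces $M_N=N^{-1}\EE\hat{\E}_N\lfloor w_N\rfloor$ (the winding rate under $\mu_N(\x;{\rm m},\delta_0)$, i.e.\ the polymer started at $\delta_0$), notes $\EE\hat{\E}_N w_N=0$ ``by symmetry'' there (where the reflection invariance of the noise interacts only with the trivially symmetric data $\delta_0$ and $\mathrm m$), concludes $M_N\to 0$ from $|\lfloor w_N\rfloor-w_N|<1$, and then transfers this to $\tilde M_N=E\eta_1$ by re-running the boundary-replacement estimate of Proposition~\ref{prop010107-22}. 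You instead apply the reflection $x\mapsto 1-x$ directly to the stationary measure $\mathcal P$: this is cleaner (no approximation step) but requires the extra check that $\pi_\infty$ is reflection-invariant, which you correctly supply via the Brownian-bridge description of Proposition~\ref{p.tk1} (the bridge satisfies $B(1-\cdot)\stackrel{d}{=}B(\cdot)$). Your bookkeeping with $Z_{k,k-1}(-x_k-j,-x_{k-1})=Z_{k,k-1}(x_k'-j,x_{k-1}')$ via \eqref{Zts} is right, so the map $(\eta_k)\mapsto(-\eta_k)$ is indeed $\mathcal P$-preserving and $E\eta_j=0$ follows. Both approaches hinge on the same underlying noise reflection invariance; the paper localises its use to the $(\mathrm m,\delta_0)$ endpoint data and pays with an approximation lemma it already has, whereas you globalise it and pay with a symmetry check on $\pi_\infty$. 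One caveat: the ``alternative route'' you mention at the end (taking $\EE$ of $Z_{k,k-1}$ and using the heat kernel) is more delicate than stated, since $\E_\x\eta_k$ and the density $\mu_N$ share the same noise and do not decouple under a naive expectation; you are right to present the reflection argument as primary.
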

    
    \begin{proof}
   First, \eqref{021207-22a} is a direct consequence of Lemma~\ref{lm010107-22}. Now we prove \eqref{021207-22}. Define \begin{align}
&
 M_N:= \EE \int \E_{\x}\frac{\sum_{k=1}^N \eta_k}{N} \mu_N(\x;{\rm m},\delta_0)d\x_{0,N} =\EE \int  \E_{\x}\frac{Y_N}{N} \mu_N(\x;{\rm m},\delta_0)d\x_{0,N} ,\\
&
\tilde{M}_N:= \E_{\varrho}\E_{\tilde \varrho}\EE \int  \E_{\x}\frac{\sum_{k=1}^N \eta_k}{N} \mu_N(\x;\tilde\vrho,\vrho) d\x_{0,N} =E \eta_1 .\notag
\end{align}
    By following the proof of Proposition~\ref{prop010107-22} and applying Lemma~\ref{lm010107-22}, we have 
    $M_N-\tilde{M}_N\to0$ as $N\to\infty$. On the other hand, note that $M_N=N^{-1}\EE \hat{\E}_N \lf w_N\rf $. By symmetry we have $\EE \hat{\E}_N w_N=0$, which implies that $M_N\to0$ as $N\to\infty$, since $w_N-1<\lf w_N\rf \leq w_N$. This further implies that $\tilde{M}_N=E \eta_1=0$. The proof is complete.
    \end{proof}

\subsection{The  correlation   mixing}

\label{sec3.4}
In this section, we will show that the stationary sequence $\{\eta_k\}_{k\in\Z}$ constructed in Section~\ref{sec4.3} satisfies a central limit theorem:
\begin{equation}\label{e.clt1}
\frac{\sum_{k=1}^N \eta_k}{\sqrt{N}}\Rightarrow N(0,\sigma_{\mathrm{eff}}^2), \quad\quad \mbox{ as } N\to\infty.
\end{equation}
With \eqref{e.clt1}, Proposition~\ref{prop010107-22}, we conclude the proof of Theorem~\ref{clt}.

Consider the probability space space $(\Z^\Z,\si({\cal C}),{\cal
  P})$. Let ${\cal F}_j$ and ${\cal F}^j$ be the $\si$-algebras generated by
$\{\eta_k\}_{k\le j}$ and $\{\eta_k\}_{k\ge j}$, respectively. Define 
 the correlation coefficient between two square integrable
   random variables $X$, $Y$   as
   $$
{\rm corr}[X,Y]=\frac{\cov[X,Y]}{(E X^2)^{1/2}(E Y^2)^{1/2}}.
   $$ We have the following definition of the $\rho-$mixing coefficient:
 \begin{definition}($\rho$-mixing coefficients, see \cite[Section 19]{bil})
   \label{df-cor}
   The  $\rho$-mixing coefficients for the stationary sequence $\{\eta_k\}_{k\in\Z} $ are defined as
   \begin{equation}
     \label{rho-n}
     r(n):=\sup\left\{|{\rm corr}[F,G]|:\,F\in {\cal F}_j,\,G\in {\cal
         F}^{j+n}\right\},\quad n=1,2,\ldots.
     \end{equation}
   \end{definition}   
Note that, due to the stationarity, the definition of $ r(n)$ in \eqref{rho-n} does
not depend on $j$. The main result of this section is the following: 
\begin{proposition}
  \label{thm011207-22}
There exist
$C,\lambda >0$ such that 
\begin{equation}
\label{021307-22b}
|\cov[F,G]|\leq Ce^{-\lambda n}\|F\|_{L^2}\|G\|_{L^2}
\end{equation}
for any $n\geq 1$ and $F,G$ that are $\F_j$ and $\F^{j+n}$ measurable 
respectively. As a consequence, we have 
  \begin{equation}
     \label{rho-n1}
     r(n)\le Ce^{-\lambda n}.
     \end{equation}
\end{proposition}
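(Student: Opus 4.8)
The plan is to exploit the conditional independence structure of the $\eta_k$'s together with the exponential mixing of the endpoint process from Proposition~\ref{p.tk1}. The key observation, already recorded in the construction of the path measure in Section~\ref{sec4.3}, is that conditionally on the full trajectory $\x=(x_k)_{k\in\Z}$ of the polymer on the cylinder (sampled from $\mu_\infty(\x;\tilde\vrho,\vrho)$, the stationary version of $\mu_N$), the increments $\{\eta_k\}$ are independent, with $\eta_k$ depending only on the pair $(x_{k-1},x_k)$ and the noise in the slab $[k-1,k]$. Therefore a random variable $F$ that is $\F_j$-measurable can, after conditioning on $\x$, be written as a function of $(x_k)_{k\le j}$ and the noise up to time $j$ only; likewise $G$ being $\F^{j+n}$-measurable becomes, after conditioning, a function of $(x_k)_{k\ge j+n}$ and the noise after time $j+n$. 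Thus the only source of correlation between $F$ and $G$ is the correlation of the conditioning trajectory $(x_k)_{k\le j}$ with $(x_k)_{k\ge j+n}$.

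First I would make this reduction precise: write $\cov[F,G]=\cov\big[E[F\mid \x,\xi],\,E[G\mid\x,\xi]\big]+E\big[\cov[F,G\mid\x,\xi]\big]$, and note the second term vanishes by conditional independence (the noise slabs $[\,\cdot\,,j]$ and $[j+n,\,\cdot\,]$ are disjoint, and $\x$ is given). So it suffices to bound the covariance of $\bar F(\x_{\le j},\xi_{\le j})$ and $\bar G(\x_{\ge j+n},\xi_{\ge j+n})$ under the stationary joint law. Second, I would insert an intermediate resampling: replace the segment of the trajectory at times $\ge j+n$ by one driven by a freshly sampled stationary endpoint density $\tilde\vrho'$ at some time strictly between $j$ and $j+n$ (say $j+\lceil n/2\rceil$), independent of everything before. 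Concretely, using the identity $\tilde\rho_{N+\ell,N}(\tilde\vrho;\cdot)\stackrel{\text{law}}{=}\tilde\vrho$ that was established in the consistency argument, the backward endpoint process started at $j+n$ and run to $j+\lceil n/2\rceil$ is, up to an error controlled by Proposition~\ref{p.tk1}, equal in law to an independent stationary field. After this surgery, $\bar F$ and $\bar G$ become genuinely independent, so their covariance is zero, and what remains is to estimate the error of the surgery.

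The error bound is where the quantitative input enters, and it follows the same template as the estimates on ${\rm I}_N$ and ${\rm II}_N$ in the proof of Proposition~\ref{prop010107-22}: the difference of the two joint densities is controlled by $\sup_x|\rho_{m,0}(x;\nu)-\rho_{m,0}(x;\nu')|$ with $m\sim n/2$, together with uniform moment bounds on the quantities $\sup_x\{\rho_{m,0}(x;\cdot)^{\pm p}\}$ and the analogous bounds \eqref{e.mmbdrho1} for $\vrho$, $\tilde\vrho$; a H\"older splitting then yields a bound of the form $Ce^{-\lambda n/2}$ times $\|F\|_{L^2}\|G\|_{L^2}$ (the $L^2$ norms appearing because, before conditioning, one applies Cauchy--Schwarz in the $F,G$ variables, and conditional expectation is an $L^2$ contraction). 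Renaming constants gives \eqref{021307-22b}, and dividing by $(EF^2)^{1/2}(EG^2)^{1/2}$ and taking the supremum over $F\in\F_j$, $G\in\F^{j+n}$ gives \eqref{rho-n1}. The main obstacle I anticipate is bookkeeping: one must be careful that the functional of the trajectory encoded by $F$ may depend on infinitely many coordinates $x_k$ with $k\le j$, so the resampling must be set up so that everything to the left of the cut point is left untouched and only the right portion's initial condition is swapped; making the "replace $\tilde\vrho'$ by an independent stationary copy" step rigorous on the infinite path space $\Z^\Z$ (rather than on a finite $\Z^N$ as in Section~\ref{sec4.3}) requires a short approximation by cylinder functions, but this is routine given the uniform-in-$N$ bounds already available.
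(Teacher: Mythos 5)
Your proposal follows essentially the same route as the paper's proof: both introduce a cut point $\ell\sim j+n/2$ in the middle of the gap, replace the conditioning $\delta_{x_\ell}$ by fresh independent samples from $\pi_\infty$, bound the resulting density difference by the exponential mixing estimate in Proposition~\ref{p.tk1}, and extract the $L^2$ norms of $F,G$ via Cauchy--Schwarz. Your upfront conditional-covariance decomposition $\cov[F,G]=\cov\big[E[F\mid\x,\xi],E[G\mid\x,\xi]\big]$ is a tidy repackaging of what the paper does implicitly through the factorized density representation $p_1p_2\mathbf{p}$ versus $p_3p_4\mathbf{p}$, but the substance of the argument is the same.
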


%
From \eqref{rho-n1}, Proposition \ref{prop011207-22}
   and \cite[Theorem 19.2]{bil}, we immediately conclude the proof of \eqref{e.clt1}, with 
   \begin{equation}\label{e.defsigma}
   \sigma_{\mathrm{eff}}^2=\sum_{j\in\Z} E[\eta_0\eta_j].
   \end{equation}

Note that $\sigma_{\mathrm{eff}}^2<\infty$ is a direct consequence of \eqref{rho-n1} and \eqref{021207-22a}. We will show $\sigma_{\mathrm{eff}}^2>0$ in Section~\ref{s.diff} below.

The rest of the section is devoted to the proof of Proposition~\ref{thm011207-22}. 

\begin{proof}
Recall that $ E$ denote the expectation with respect to ${\cal
  P}$. 
  Suppose   that 
\begin{align*}
F=f\big(\eta_{1},\ldots,\eta_{m_1}\big),\quad G=g\big(\eta_{m_1+n},\ldots,\eta_{m_2+n}\big)
\end{align*}
for some $m_1,m_2\in \N$ and Borel measurable functions 
$$
f:\bbR^{m_1}\to\bbR,\quad g:\bbR^{m_2-m_1+1}\to\bbR.
$$
Throughout the proof, to simplify the notation, define 
\[
\x=(x_0,\ldots,x_N), \quad\quad 
N=m_2+n.
\]
We have 
\begin{equation}\label{e.covFG}
\begin{aligned}
 E[FG]=\E_\varrho \EE \int  \E_\x[f\big(\eta_{1},\ldots,\eta_{m_1}\big)g\big(\eta_{m_1+n},\ldots,\eta_{m_2+n}\big)]\mu_{N}(\x;\varrho_1,\varrho_2)d\x_{0,N},
\end{aligned}
\end{equation}
where $\varrho_1,\varrho_2$ are sampled independently from $\pi_\infty$, also independent from $\xi$, and  $\E_\varrho$ is the expectation on them. We recall that 
\[
\mu_N(\x;\varrho_1, \varrho_2):=\varrho_1(x_N)\frac{\prod_{k=1}^N
  G_{k,k-1}(x_k,x_{k-1})}{G_{N,0}(\varrho_1,\varrho_2)}\varrho_2(x_0)
  \]
  is a density function, 
  and 
  \[
  G_{N,0}(\varrho_1,\varrho_2)=\int \varrho_1(x_N)\prod_{k=1}^N
  G_{k,k-1}(x_k,x_{k-1})  \varrho_2(x_0) d\x_{0,N}.
  \]
  The goal is to show that, when $N$ is large, the density $\mu_N(\x;\varrho_1,\varrho_2)$ factorizes into two independent ones, with an error that is exponentially small in $n$. The   proof consists of several steps.  
  
  \emph{Step 1. Rewriting $E[FG]$.}
  Since
  $\E_\x[f\big(\eta_{1},\ldots,\eta_{m_1}\big)g\big(\eta_{m_1+n},\ldots,\eta_{m_2+n}\big)]$
  only depends on the variables $\x_{0,m_1},\x_{m_1+n-1,N}$, we will
  first integrate out other variables in
  $\mu_N(\cdot;\varrho_1,\varrho_2)$. We keep a ``middle'' one for a future purpose: define 
\[
\ell=m_1+\lf n/2\rf.
\]
After integrating out the variables $\x_{m_1+1,\ell-1},\x_{\ell+1,m_1+n-2}$, we obtain  
\begin{equation}\label{e.7191}
\begin{aligned}
\int& \mu_N(\x; \varrho_1, \varrho_2)d\x_{m_1+1,\ell-1}d\x_{\ell+1,m_1+n-2}\\
=&G_{N,0}(\varrho_1,\varrho_2)^{-1}\varrho_1(x_N)\prod_{k=m_1+n}^NG_{k,k-1}(x_k,x_{k-1})G_{m_1+n-1,\ell}(x_{m_1+n-1},x_\ell)\\
&\times G_{\ell,m_1}(x_\ell,x_{m_1}) \prod_{k=1}^{m_1} G_{k,k-1}(x_k,x_{k-1})\varrho_2(x_0).
\end{aligned}
\end{equation}
Recall the definition of forward and backward density in \eqref{020107-22}, we rewrite the two factors in \eqref{e.7191} that contain $x_\ell$ as
\begin{equation}\label{e.7192}
\begin{aligned}
&G_{m_1+n-1,\ell}(\cdot ,x_\ell)=\rho_{\rm per}(m_1+n-1,\cdot; \ell,\delta_{x_\ell})\int G_{m_1+n-1,\ell}(y ,x_\ell)dy,\\
&G_{\ell,m_1}(x_\ell,\cdot)=\tilde{\rho}_{\rm per}(\ell,\delta_{x_\ell};m_1,\cdot)\int G_{\ell,m_1}(x_\ell,y )dy.
\end{aligned}
\end{equation}
Further define the normalization constant
\begin{equation}\label{e.7193}
\begin{aligned}
&h_1(x_\ell)=\int
\varrho_1(x_N)\prod_{k=m_1+n}^NG_{k,k-1}(x_k,x_{k-1})\rho_{\rm per}(m_1+n-1, x_{m_1+n-1};\ell,\delta_{x_\ell})d\x_{m_1+n-1,N},\\
&h_2(x_\ell)=\int \tilde{\rho}_{\rm per}(\ell,\delta_{x_\ell};m_1,x_{m_1})\prod_{k=1}^{m_1} G_{k,k-1}(x_k,x_{k-1})\varrho_2(x_0) d\x_{0,m_1},
\end{aligned}
\end{equation}
and the densities
\begin{equation}\label{e.7194}
\begin{aligned}
  &p_1(\x_{m_1+n-1,N},x_\ell)=h_1(x_\ell)^{-1}\varrho_1(x_N)\prod_{k=m_1+n}^NG_{k,k-1}(x_k,x_{k-1})\\
  &
  \times \rho_{\rm
per}(m_1+n-1, x_{m_1+n-1};\ell,\delta_{x_\ell}),\\
&p_2(x_\ell,\x_{0,m_1})=h_2(x_\ell)^{-1}\tilde{\rho}_{\rm per}(\ell,\delta_{x_\ell};m_1,x_{m_1})\prod_{k=1}^{m_1} G_{k,k-1}(x_k,x_{k-1})\varrho_2(x_0).
\end{aligned}
\end{equation}

Using the above notations, one can rewrite \eqref{e.7191} as 
\[
\begin{aligned}
&\int \mu_N(\x;\varrho_1, \varrho_2)d\x_{m_1+1,\ell-1}d\x_{\ell+1,m_1+n-2}\\
&=p_1(\x_{m_1+n-1,N},x_\ell)p_2(x_\ell,\x_{0,m_1}) \mathbf{p}(x_\ell),
\end{aligned}
\]
where $\mathbf{p}(\cdot)$ is a density that takes the form
\[
\begin{aligned}
\mathbf{p}(x_\ell)=&G_{N,0}(\varrho_1,\varrho_2)^{-1}h_1(x_\ell)h_2(x_\ell)\\
&\times  \big(\int G_{m_1+n-1,\ell}(y ,x_\ell)dy\big)\big(\int G_{\ell,m_1}(x_\ell,y )dy\big).
\end{aligned}
\]
It is clear that $\mathbf{p}$ is the marginal density of $x_\ell$, since 
\[
\int \mu_N(\x;\varrho_1, \varrho_2)d\x_{0,\ell-1}d\x_{\ell+1,N}=\mathbf{p}(x_\ell).
\]
In this way, the expectation of the product is rewritten as
\begin{equation}\label{e.EFG}
\begin{aligned}
E[FG]=&\E_\varrho \EE \int  \E_\x[f\big(\eta_{1},\ldots,\eta_{m_1}\big)g\big(\eta_{m_1+n},\ldots,\eta_{m_2+n}\big)]\\
&\times p_1(\x_{m_1+n-1,N},x_\ell)p_2(x_\ell,\x_{0,m_1}) \mathbf{p}(x_\ell)d\x_{m_1+n-1,N} d\x_{0,m_1} dx_\ell.
\end{aligned}
\end{equation}

\emph{Step 2. Rewriting $E[F]E[G]$.} Let $\varrho_3,\varrho_4$ be sampled independently from $\pi_\infty$,  which are also independent from $\varrho_1,\varrho_2$ and the random environment. Define 
\begin{equation}\label{e.7195}
\begin{aligned}
  &p_3(\x_{m_1+n-1,N})\\
  &
  =\frac{\varrho_1(x_N)\prod_{k=m_1+n}^NG_{k,k-1}(x_k,x_{k-1})\rho_{\rm  per}(m_1+n-1 ,x_{m_1+n-1};\ell,\varrho_3)}{\int
\varrho_1(x_N')\prod_{k=m_1+n}^NG_{k,k-1}(x_k',x_{k-1}')\rho_{\rm per}(m_1+n-1, x_{m_1+n-1}';\ell,\varrho_3)d\x_{m_1+n-1,N}'},\\
&p_4(\x_{0,m_1})\\
&
=\frac{\tilde{\rho}_{\rm   per}(\ell,\varrho_4;m_1,x_{m_1})\prod_{k=1}^{m_1}
  G_{k,k-1}(x_k,x_{k-1})\varrho_2(x_0)}{\int \tilde{\rho}_{\rm per}(\ell,\varrho_4;m_1,x_{m_1}')\prod_{k=1}^{m_1} G_{k,k-1}(x_k',x_{k-1}')\varrho_2(x_0')d\x_{0,m_1}'}.
\end{aligned}
\end{equation}
In other words, in the expressions of $p_1,p_2$, we have replaced $\delta_{x_\ell}$ with $\varrho_3,\varrho_4$ to obtain $p_3,p_4$  respectively. Now it is straightforward to check that
\begin{equation}\label{e.EFEG}
\begin{aligned}
E[F]E[G]=\E_\varrho \EE &\int \E_\x[f\big(\eta_{1},\ldots,\eta_{m_1}\big)g\big(\eta_{m_1+n},\ldots,\eta_{m_2+n}\big)]\\
&\times p_3(\x_{m_1+n-1,N} )p_4( \x_{0,m_1}) \mathbf{p}(x_\ell) d\x_{m_1+n-1,N} d\x_{0,m_1}dx_\ell.
\end{aligned}
\end{equation}
In the above expression, the term $\mathbf{p}(x_\ell)$ actually plays no role since one can integrate it out and $\int \mathbf{p}(x_\ell)dx_\ell=1$ -- we kept it there to compare with the expression of $E[FG]$.

Combining \eqref{e.EFG} and \eqref{e.EFEG}, we have 
\begin{equation}\label{e.covFG1}
\begin{aligned}
\cov[F,G]=\E_\varrho \EE& \int  \E_\x[f\big(\eta_{1},\ldots,\eta_{m_1}\big)g\big(\eta_{m_1+n},\ldots,\eta_{m_2+n}\big)]\\
&\times [p_1(\x_{m_1+n-1,N},x_\ell)p_2(x_\ell,\x_{0,m_1})-p_3(\x_{m_1+n-1,N} )p_4( \x_{0,m_1})] \\
&\times \mathbf{p}(x_\ell)d\x_{m_1+n-1,N} d\x_{0,m_1} dx_\ell.
\end{aligned}
\end{equation}

\emph{Step 3. Approximation.} Now we decompose $\cov[F,G]=Err_1(n)+Err_2(n)$, with 
\[
\begin{aligned}
&Err_1(n)=\E_\varrho \EE \int \E_\x[f\big(\eta_{1},\ldots,\eta_{m_1}\big)g\big(\eta_{m_1+n},\ldots,\eta_{m_2+n}\big)]\\
&\times [p_1(\x_{m_1+n-1,N},x_\ell)-p_3(\x_{m_1+n-1,N})]p_2(x_\ell,\x_{0,m_1})  \mathbf{p}(x_\ell)d\x_{m_1+n-1,N} d\x_{0,m_1} dx_\ell,
\end{aligned}
\]
and
\[
\begin{aligned}
&Err_2(n)=\E_\varrho \EE \int  \E_\x[f\big(\eta_{1},\ldots,\eta_{m_1}\big)g\big(\eta_{m_1+n},\ldots,\eta_{m_2+n}\big)]\\
&\times p_3(\x_{m_1+n-1,N})[p_2(x_\ell,\x_{0,m_1})-  p_4( \x_{0,m_1})]\mathbf{p}(x_\ell)d\x_{m_1+n-1,N} d\x_{0,m_1} dx_\ell.
\end{aligned}
\]
It suffices to show that $|Err_i(n)|\leq Ce^{-\lambda n}\|F\|_{L^2}\|G\|_{L^2}$ for $i=1,2$. The two cases are handled in the same way, so we will only focus on $Err_1(n)$.

The rest of the proof is very similar to that of Proposition~\ref{prop010107-22}. First, from \eqref{e.7194} and \eqref{e.7195} we have 
\[
|p_1(\x_{m_1+n-1,N},x_\ell)-p_3(\x_{m_1+n-1,N})|\leq I_1+I_2,
\]
with 
\begin{align*}
&I_1=\frac{\varrho_1(x_N)\prod_{k=m_1+n}^NG_{k,k-1}(x_k,x_{k-1})
  }{\int
  \varrho_1(x_N')\prod_{k=m_1+n}^NG_{k,k-1}(x_k',x_{k-1}')d\x_{m_1+n-1,N}'\cdot
  \inf_y\rho_{\rm per}(m_1+n-1, y;\ell,\delta_{x_\ell})}\\
  &
    \times \sup_y|\rho_{\rm per}(m_1+n-1, y;\ell,\delta_{x_\ell})-\rho_{\rm
    per}(m_1+n-1, y;\ell,\varrho_3)|,
\end{align*}
\[
\begin{aligned}
I_2=&\frac{\varrho_1(x_N)\prod_{k=m_1+n}^NG_{k,k-1}(x_k,x_{k-1})}{\int \varrho_1(x_N')\prod_{k=m_1+n}^NG_{k,k-1}(x_k',x_{k-1}')d\x_{m_1+n-1,N}' }\\
&\times \frac{\sup_y\rho_{\rm per}(m_1+n-1,
  y;\ell,\varrho_3)}{\inf_y\rho_{\rm per}(m_1+n-1, y;\ell,\delta_{x_\ell})\inf_y\rho_{\rm
    per}(m_1+n-1, y;\ell,\varrho_3)}\\
&
\times \sup_y|\rho_{\rm per}(m_1+n-1,
  y;\ell,\delta_{x_\ell})-\rho_{\rm per}(m_1+n-1,
  y;\ell,\varrho_3)|.
\end{aligned}
\]
Thus, $Err_1(n)$ can be bounded from above by 
\[
\begin{aligned}
Err_1(n)\leq\E_\varrho \EE \int& \E_\x[|f\big(\eta_{1},\ldots,\eta_{m_1}\big)g\big(\eta_{m_1+n},\ldots,\eta_{m_2+n}\big)|]\\
&\times [I_1+I_2]p_2(x_\ell,\x_{0,m_1})  \mathbf{p}(x_\ell)d\x_{m_1+n-1,N} d\x_{0,m_1} dx_\ell=:J_1+J_2.
\end{aligned}
\]
Consider the term $J_1$. We first bound $I_1$ by 
\[
\begin{aligned}
I_1\leq &\frac{\varrho_1(x_N)\prod_{k=m_1+n}^NG_{k,k-1}(x_k,x_{k-1}) }{\int \varrho_1(x_N')\prod_{k=m_1+n}^NG_{k,k-1}(x_k',x_{k-1}') d\x_{m_1+n-1,N}'}\\
&\times  \frac{\sup_{y,x}|\rho_{\rm per}(m_1+n-1,
  y;\ell,\delta_{x})-\rho_{\rm per}(m_1+n-1, y;\ell,\varrho_3)|}{
  \inf_{y,x}\rho_{\rm per}(m_1+n-1, y;\ell,\delta_{x})}.
\end{aligned}
\]
Using \eqref{e.7194}, we bound $p_2(x_\ell,\x_{0,m_1})$ by 
\[
p_2(x_\ell,\x_{0,m_1})\leq \frac{\prod_{k=1}^{m_1}
  G_{k,k-1}(x_k,x_{k-1})\varrho_2(x_0)}{\int \prod_{k=1}^{m_1}
  G_{k,k-1}(x_k',x_{k-1}')\varrho_2(x_0')d\x_{0,m_1}'}\times
\frac{\sup_{x,y}
  \tilde{\rho}_{\rm
  per}(\ell,\delta_{x};m_1,y)}{\inf_{x,y}\tilde{\rho}_{\rm
  per}(\ell,\delta_{x};m_1,y)}.
\]
In this way we got rid of the dependence on $x_\ell$ in all other terms except for   $\mathbf{p}(x_\ell)$ which can be  integrated out. By the independence of the random environment in separate time intervals, we obtain $J_1\leq \prod_{i=1}^4 K_i$, 
with
\[
\begin{aligned}
K_1:= \E_\varrho\EE&\int \E_\x|f\big(\eta_{1},\ldots,\eta_{m_1}\big)|\frac{\prod_{k=1}^{m_1} G_{k,k-1}(x_k,x_{k-1})\varrho_2(x_0)}{\int \prod_{k=1}^{m_1} G_{k,k-1}(x_k',x_{k-1}')\varrho_2(x_0')d\x_{0,m_1}'} d\x_{0,m_1},\\
K_2:= \E_\varrho\EE& \int \E_\x|g\big(\eta_{m_1+n},\ldots,\eta_{m_2+n}\big)|\\
&\times \frac{\varrho_1(x_N)\prod_{k=m_1+n}^NG_{k,k-1}(x_k,x_{k-1}) }{\int \varrho_1(x_N')\prod_{k=m_1+n}^NG_{k,k-1}(x_k',x_{k-1}') d\x_{m_1+n-1,N}'} d\x_{m_1+n-1,N},\\
K_3:= \E_\varrho\EE & \frac{\sup_{y,x}|\rho_{\rm per}(m_1+n-1,
  y;\ell,\delta_{x})-\rho_{\rm per}(m_1+n-1, y;\ell,\varrho_3)|}{
  \inf_{y,x}\rho_{\rm per}(m_1+n-1, y;\ell,\delta_{x})},\\
K_4:=\E_\varrho\EE&\frac{\sup_{x,y}
  \tilde{\rho}_{\rm
  per}(\ell,\delta_{x};m_1,y)}{\inf_{x,y}\tilde{\rho}_{\rm
  per}(\ell,\delta_{x};m_1,y)}.\end{aligned}
\]
Applying Proposition~\ref{p.tk1}, we have $K_3\leq Ce^{-\lambda n}$
and $K_4\leq C$. For $K_1$, we can bound it from above by 
\[
\begin{aligned}
K_1\leq\E_\varrho\EE&\int\E_\x|f\big(\eta_{1},\ldots,\eta_{m_1}\big)|\\
&\times \frac{\varrho_4(x_{m_1})\prod_{k=1}^{m_1} G_{k,k-1}(x_k,x_{k-1})\varrho_2(x_0)}{\int \varrho_4(x_{m_1}') \prod_{k=1}^{m_1} G_{k,k-1}(x_k',x_{k-1}')\varrho_2(x_0')d\x_{0,m_1}'} d\x_{0,m_1} \times \frac{\sup_y \varrho_4(y)}{\inf_y \varrho_4(y)}.
\end{aligned}
\]
Applying the Cauchy-Schwarz inequality, we have 
\[
\begin{aligned}
K_1^2\leq C\E_\varrho\EE \bigg(\int&\E_\x|f\big(\eta_{1},\ldots,\eta_{m_1}\big)|\\
&\times \frac{\varrho_4(x_{m_1})\prod_{k=1}^{m_1} G_{k,k-1}(x_k,x_{k-1})\varrho_2(x_0)}{\int \varrho_4(x_{m_1}') \prod_{k=1}^{m_1} G_{k,k-1}(x_k',x_{k-1}')\varrho_2(x_0')d\x_{0,m_1}'} d\x_{0,m_1} \bigg)^2.
\end{aligned}
\]
The r.h.s. can be further bounded from above by 
\[
\begin{aligned}
 C\E_\varrho\EE \int&\E_\x|f\big(\eta_{1},\ldots,\eta_{m_1}\big)|^2\\
 &\times \frac{\varrho_4(x_{m_1})\prod_{k=1}^{m_1} G_{k,k-1}(x_k,x_{k-1})\varrho_2(x_0)}{\int \varrho_4(x_{m_1}') \prod_{k=1}^{m_1} G_{k,k-1}(x_k',x_{k-1}')\varrho_2(x_0')d\x_{0,m_1}'} d\x_{0,m_1} =C\|F\|_{L^2}^2.
\end{aligned}
\]
The same proof shows that $K_2 \leq C\|G\|_{L^2}$. So we have $J_1\leq Ce^{-\lambda n} \|F\|_{L^2}\|G\|_{L^2}$. The term $J_2$ is dealt with in the same way, and this combines to show that $Err_1(n)\leq Ce^{-\lambda n} \|F\|_{L^2}\|G\|_{L^2}$. Since $Err_2(n)$ is proved in exactly the same way, we complete the proof of the proposition.
\end{proof}

\section{Proof of Theorem~\ref{t.mainth} }
\label{s.clt1}
 


Recall the goal was to show that, as $T\to\infty$,
$\frac{w_T}{\sqrt{T}}$ converges to a nondegenerate Gaussian
distribution under the measure $\PP\otimes \hat{\Pb}_T$. 
As we have already observed in Section \ref{sec3.4}
the laws of 
$\frac{w_N}{\sqrt{N}}$  under the measure $\PP\otimes \hat{\Pb}_N$ weakly
converge to  $N(0,\sigma_{\mathrm{eff}}^2)$,   as  $N\to\infty$.
In this section we  show
how to extend the conclusion to the laws of $\frac{w_T}{\sqrt{T}}$,
under the measure $\PP\otimes \hat{\Pb}_T$, when $T\to\infty$ (not necessarily
taking integer values).

For a general $T>0$, define $N:=\lf T\rf $. 
Let $X_1(T)$ and $X_2(T)$ be  random variables with the same laws as $w_{N}$ under
$\PP\otimes \hat{\Pb}_N$ and  $w_{T}$ under $\PP\otimes \hat{\Pb}_T$, respectively. 
We have the following lemma, which is the last piece that is needed to complete the proof of Theorem~\ref{t.mainth}. 
\begin{lemma}
There exists $C>0$ such that for any $T\ge1$ one can find a coupling
 $\Big(X_1(T), X_2(T)\Big)$ such that
\begin{equation}
\label{X1X2}
\E \big|X_2(T)-X_1(T)\big|^2 \leq C.
\end{equation}
\end{lemma}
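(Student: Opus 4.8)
The plan is to build all the randomness on one probability space carrying a single copy of the noise $\xi$, and to realize $X_1(T)$ as $w_N$ under $\hat{\Pb}_N$ and $X_2(T)$ as $w_T$ under $\hat{\Pb}_T$ through the winding representation of Section~\ref{s.markov}. With $N=\lf T\rf$ and $s=T-N\in[0,1)$, write $w_N=\sum_{k=1}^N\eta_k+x_N$, where $x_N=w_N-\lf w_N\rf\in[0,1)$, $x_0=0$, the torus path $(x_1,\dots,x_N)$ has density $\propto\prod_{k=1}^NG_{k,k-1}(x_k,x_{k-1})$, and, given the path, the $\eta_k$ are conditionally independent with law \eqref{e.laweta}. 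The key fact is that the joint law of $\big((x_1,\dots,x_N),(\eta_1,\dots,\eta_N)\big)$ is identical under $\hat{\Pb}_N$ and under $\hat{\Pb}_T$, except that under $\hat{\Pb}_T$ the torus path is tilted by the factor $\bar u_{T,N}(x_N):=\int_\R Z_{T,N}(x,x_N)\,dx=\int_\bT G_{T,N}(x,x_N)\,dx$; this is a positive, $1$-periodic (by \eqref{Zts}) function of $x_N$ alone, independent of the restriction of $\xi$ to $[0,N]$, and (for Step~2) $\inf_\bT\bar u_{T,N}$ has negative moments bounded uniformly in $T\ge1$ by standard lower bounds for the SHE propagator over short times (cf.\ \cite[Lemma 4.1]{GK21}). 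I then compare $w_N^{(N)}$ (meaning $w_N$ under $\hat{\Pb}_N$) with $w_N^{(T)}$ (under $\hat{\Pb}_T$) in Step~1, and $w_N^{(T)}$ with $w_T^{(T)}$ in Step~2, and set $X_1(T)=w_N^{(N)}$, $X_2(T)=w_T^{(T)}$.

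\emph{Step~1 (the main step).} Read the torus path backwards from time $N$. Under \emph{both} measures the conditional law of $x_{k-1}$ given $x_k$ is the same kernel $K_k(\cdot\mid x_k)\propto G_{k,k-1}(x_k,\cdot)\,G_{k-1,0}(\cdot,0)$; only the marginal of the top coordinate $x_N$ differs, by the $\bar u_{T,N}$-tilt. A direct computation shows that the bridge factor $G_{k-1,0}(\cdot,0)$ cancels in the overlap estimate, so $\int_\bT\min\{K_k(y\mid x),K_k(y\mid x')\}\,dy\ge\inf_\bT G_{k,k-1}/\sup_\bT G_{k,k-1}$; since $0<\inf_\bT G_{k,k-1}\le\sup_\bT G_{k,k-1}<\infty$ a.s.\ (\cite[Lemma 4.1]{GK21}), its expectation is $1-\kappa$ for some $\kappa\in[0,1)$, the same for every $k$ by the time-independence of $\xi$. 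Running the coalescing coupling of the two backward chains, they meet at a depth $R^{*}$ with $\PP[R^{*}\ge R]\le C\kappa^{R}$, and once they meet they agree on all earlier coordinates, so the windings may be coupled so that $\eta_k^{(N)}=\eta_k^{(T)}$ for $k\le N-R^{*}$. Then $w_N^{(N)}-w_N^{(T)}=\sum_{k=N-R^{*}+1}^{N}(\eta_k^{(N)}-\eta_k^{(T)})+(x_N^{(N)}-x_N^{(T)})$ with $|x_N^{(N)}-x_N^{(T)}|<1$, and Minkowski's inequality, the bounds $\E|\eta_k^{(N)}|^{4},\E|\eta_k^{(T)}|^{4}\le{\frak E}_4<\infty$ from Lemma~\ref{lm010107-22} (for $\eta_k^{(T)}$ applied conditionally on $\bar u_{T,N}$, using the independence noted above, since $\bar u_{T,N}/\|\bar u_{T,N}\|_{L^1(\bT)}$ is a probability density on $\bT$), Cauchy--Schwarz over the events $\{R^{*}=R\}$, and $\PP[R^{*}\ge R]\le C\kappa^{R}$ together give $\E|w_N^{(N)}-w_N^{(T)}|^{2}\le C$ uniformly in $T$.

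\emph{Step~2 (the easy step).} Here $w_N^{(T)}$ and $w_T^{(T)}$ are read off the same $\hat{\Pb}_T$-polymer, so $w_T^{(T)}-w_N^{(T)}$ is its displacement over $[N,T]$, which, under $\hat{\Pb}_T$ conditioned on $w_N=y$, has density $\propto Z_{T,N}(\cdot,y)$. By \eqref{Zts} the resulting conditional second moment depends on $y$ only through $y-\lf y\rf$, so $\hat{\E}_T[(w_T-w_N)^{2}]\le\sup_{u\in[0,1)}\big(\bar u_{T,N}(u)^{-1}\int_\R(x-u)^{2}Z_{T,N}(x,u)\,dx\big)$, and by H\"older's inequality, the negative-moment bound on $\inf_\bT\bar u_{T,N}$, and Lemma~\ref{l.mmZ} with $t=s\le1$, $\E|w_T^{(T)}-w_N^{(T)}|^{2}=\EE\hat{\E}_T[(w_T-w_N)^{2}]$ is finite uniformly in $T\ge1$. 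Combining with Step~1 via $|X_2(T)-X_1(T)|^{2}\le2|w_T^{(T)}-w_N^{(T)}|^{2}+2|w_N^{(T)}-w_N^{(N)}|^{2}$ yields \eqref{X1X2}.

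The main obstacle is Step~1. The two laws of $w_N$ genuinely differ by an $O(1)$ multiplicative \emph{periodic} tilt of the endpoint, so no coupling can force $w_N^{(N)}=w_N^{(T)}$ with high probability, and a crude maximal coupling would leave an $O(1)$-probability event on which $|X_1-X_2|$ is of order $\sqrt N$, far from bounded. The remedy is to couple the \emph{backward} torus paths so that they coalesce with an exponentially small tail on the coalescence depth, which localizes the discrepancy to $O(1)$ of the windings; the two points requiring care are the contraction of the conditioned backward kernel $K_k$ (i.e.\ the cancellation of the bridge factor $G_{k-1,0}(\cdot,0)$), and the bookkeeping that keeps the discrepancy between $w_N^{(N)}$ and $w_N^{(T)}$ carried by only $O(1)$ of the $\eta_k$, so that $\E|w_N^{(N)}-w_N^{(T)}|^{2}$ is controlled by $\E R^{*}<\infty$ rather than by $N$.
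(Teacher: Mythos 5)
Your proof is correct, but it takes a genuinely different, and considerably more elaborate, route than the paper's. The paper builds the coupling in a single stroke: take $X_1$ with noise-conditional density $\rho(N,\cdot)$, and given $\xi$ and $X_1=y$ draw $X_2$ from the density $\propto Z_{T,N}(\cdot,y)$; then $\E\big[|X_2-X_1|^2\mid X_1=y\big]=\EE\,\big(\int Z_{T,N}(x',y)\,dx'\big)^{-1}\int(x-y)^2Z_{T,N}(x,y)\,dx$ is bounded by Cauchy--Schwarz together with Lemma~\ref{l.mmZ} on the short interval $[N,T]$ and the negative-moment bound on $\int Z_{T,N}(x',y)\,dx'$ supplied by \cite[Lemma B.7]{GK21}. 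Your Step~2, comparing $w_N$ and $w_T$ under the \emph{same} measure $\hat{\Pb}_T$, is essentially this calculation. What you do differently, and additionally, is Step~1: you observe that the laws of $w_N$ under $\hat{\Pb}_N$ and under $\hat{\Pb}_T$ are \emph{not} equal---they differ by the $1$-periodic $O(1)$ tilt $\bar u_{T,N}(x_N)$ of the endpoint---and that a naive maximal coupling of these two laws of $w_N$ would leave an $O(1)$-probability event on which the discrepancy is of order $\sqrt N$. Your coalescing coupling of the backward torus chains, driven by the uniform overlap bound $\inf_\bT G_{k,k-1}/\sup_\bT G_{k,k-1}$ and the independence of the noise across disjoint unit time intervals, localizes the discrepancy to $O(1)$ of the winding increments, and this is a genuinely new ingredient not present in the paper. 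It is in fact the step that reconciles the subtlety the paper elides: the paper's construction keeps $X_1\sim\rho(N,\cdot)$ while using the $\hat{\Pb}_T$-conditional kernel $Z_{T,N}(\cdot,y)/\bar u_{T,N}(y)$, so that the noise-conditional marginal of $X_2$ is $\int\rho(N,y)\,Z_{T,N}(x,y)/\bar u_{T,N}(y)\,dy$ rather than $\rho(T,x)$, and the two differ precisely because $\rho(T,\cdot)$ is reached from the $\bar u_{T,N}$-tilted marginal of $w_N$, not from $\rho(N,\cdot)$. Your two-step decomposition handles this cleanly at the cost of an argument several times longer than the paper's half page. Two minor slips: ``Lemma~\ref{l.mmZ} with $t=s\le1$'' should read $t-s\le 1$, and the cited negative-moment bound for $\inf_\bT\bar u_{T,N}$ is \cite[Lemma B.7]{GK21}, not Lemma~4.1.
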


\begin{proof}
To lighten the notation we write $X_1$ and $X_2$, instead of $X_1(T)$ and $X_2(T)$. Recall that $\rho(T,\cdot)$ is the density of $w_T$ under $\hat{\Pb}_T$, and we have the  relation 
\begin{equation}\label{e.recur}
\rho(T,x)=\frac{\int Z_{T,N}(x,y)\rho(N,y)dy}{\int Z_{T,N}(x',y')\rho(N,y')dx'dy'}.
\end{equation}
From \eqref{e.recur}, we know that, given the value of $X_1$, if we sample $X_2$ from the density $\frac{ Z_{T,N}(\cdot,X_1)}{\int Z_{T,N}(x',X_1)dx'}$, then $X_2$ has the same law as $w_T$ under $\PP\otimes\hat{\Pb}_T$.
By the construction of $X_1,X_2$, we have for any $y\in\R$ that 
\[
\E[|X_2-X_1|^2|X_1=y]=\EE \frac{\int (x-y)^2Z_{T,N}(x,y)dx}{\int Z_{T,N}(x',y)dx'}.
\]
By the Cauchy-Schwarz inequality, we have 
\[
\begin{aligned}
&\E[|X_2-X_1|^2|X_1=y] \\
&\leq \int (x-y)^2 \sqrt{\EE Z_{T,N}(x,y)^2} dx \sqrt{ \EE (\int Z_{T,N}(x',y)dx')^{-2}}.
\end{aligned}
\]
For the first term on the r.h.s., we apply Lemma~\ref{l.mmZ} to derive (note that $T-N<1$)
\[
\int (x-y)^2 \sqrt{\EE Z_{T,N}(x,y)^2} dx \leq C.
\]
For the second term,  we have for each fixed $y$ that
\[
\int Z_{T,N}(x',y)dx'\stackrel{\text{law}}{=}v_{\1}(T-N,0),
\]
with $v_{\1}$ solving the SHE \eqref{e.she1a}, with the initial data
$v_{\1}(0,x)\equiv1$. Thus, applying \cite[Lemma B.7]{GK21}, we have $\EE (\int Z_{T,N}(x',y)dx')^{-2}\leq C$. The proof is
complete. 
\end{proof}

\section{Nondegeneracy of the diffusion constant}
\label{s.diff}

The goal of this section is to show that $\sigma_{\mathrm{eff}}^2\neq
0$, which is a nontrivial fact. In general, it needs not be true that $\sum_{j\in Z}r(j)>0$ when $r(\cdot)$ is the covariance function of a stationary sequence. We will need to make use of some specific structure of our model.

We first show a stability result on the approximation of the diffusion constant. Define 
\begin{equation}\label{e.defsigma1}
\sigma_N^2:=\frac{1}{N}\EE \int \E_{\x}\big(\sum_{k=1}^N \eta_k\big)^2 \mu_N(\x;{\rm m},\delta_0)  d\x_{0,N}=\frac{1}{N}\EE\hat{\E}_N \lf w_N\rf^2 ,
\end{equation}
and
\begin{equation}\label{e.defsigma2}
\tilde{\sigma}_N^2:=\frac{1}{N}\E_{\varrho}\E_{\tilde \varrho} \EE \int \E_{\x} \big(\sum_{k=1}^N \eta_k\big)^2  \mu_N(\x;\tilde\vrho,\vrho) d\x_{0,N}=\frac{1}{N}E(\sum_{k=1}^N \eta_k)^2.
\end{equation}
We have 
\begin{proposition}\label{p.varclose}
$\sigma_N^2-\tilde{\sigma}_N^2\to0$ as $N\to\infty$.
\end{proposition}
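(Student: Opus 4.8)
The plan is to mimic the proof of Proposition~\ref{prop010107-22} as closely as possible. The new feature is that the functional in play, $\tfrac1N\big(\sum_{k=1}^N\eta_k\big)^2$, is no longer bounded, so every place where that proof used the trivial bound $|\E_\x\exp(\cdot)|\le1$ must be replaced by the Cauchy--Schwarz inequality together with control of a second moment. The second moment of the \emph{stationary} sum is harmless: by stationarity of $\{\eta_k\}$ under ${\cal P}$,
\[
\tilde{\sigma}_N^2=\frac1N E\Big(\sum_{k=1}^N\eta_k\Big)^2=\sum_{|j|<N}\Big(1-\tfrac{|j|}{N}\Big)E[\eta_0\eta_j]\le E[\eta_0^2]\sum_{j\in\Z}r(|j|),
\]
which is finite by Proposition~\ref{prop011207-22} and Proposition~\ref{thm011207-22}; moreover the same computation shows that the second moment of the sum of the $\eta_k$ over any block of length $M$, under stationary endpoint data, is $\le CM$, uniformly in $M$. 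The quantity $\sigma_N^2$, by contrast, is \emph{not} obviously $O(1)$: on the whole line the endpoint is superdiffusive, so any bound on $\sigma_N^2$ must exploit the periodicity. I will not prove $\sup_N\sigma_N^2<\infty$ separately; instead it will drop out of the main estimate by a bootstrap, and making the estimate quantitative enough for that bootstrap to close is the step I expect to require the most care.

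Fix a sequence $k_N\to\infty$ with $k_N/\sqrt N\to0$, and let $\sigma_{N,o}^2$, $\tilde{\sigma}_{N,o}^2$ be defined as in \eqref{e.defsigma1} and \eqref{e.defsigma2} but with $\big(\sum_{k=1}^N\eta_k\big)^2$ replaced by $S_o^2$, where $S:=\sum_{k=1}^N\eta_k$ and $S_o:=\sum_{k=k_N}^{N-k_N}\eta_k$. Since the $\eta_k$ are independent under $\Pb_\x$, writing $\E_\x(S-S_o)^2=\mathrm{Var}_\x(S-S_o)+(\E_\x(S-S_o))^2$, noting that $S-S_o$ is a sum of at most $2k_N$ terms, and using the $L^2$ bound of Lemma~\ref{lm010107-22}, one gets $\EE\int\E_\x(S-S_o)^2\,\mu_N\,d\x\le Ck_N^2$, and hence $\EE\int\E_\x S_o^2\,\mu_N\,d\x\le 2N\sigma_N^2+Ck_N^2$. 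From $S^2-S_o^2=2S_o(S-S_o)+(S-S_o)^2$ and two applications of Cauchy--Schwarz,
\[
|\sigma_N^2-\sigma_{N,o}^2|\le\frac{C}{N}\Big(k_N^2+k_N\sqrt{2N\sigma_N^2+Ck_N^2}\Big)\le\eps_N\sqrt{\sigma_N^2}+\delta_N ,
\]
with $\eps_N,\delta_N\to0$; likewise $|\tilde{\sigma}_N^2-\tilde{\sigma}_{N,o}^2|\to0$ (here no unknown enters, since $\sup_N\tilde{\sigma}_N^2<\infty$). It therefore remains to estimate $\sigma_{N,o}^2-\tilde{\sigma}_{N,o}^2$.

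This last step repeats the proof of Proposition~\ref{prop010107-22} with the constant factor $1$ there replaced by the nonnegative factor $\tfrac1N\E_\x S_o^2$. Integrating out $x_0,\dots,x_{k_N-2},x_{N-k_N+1},\dots,x_N$ expresses both $\sigma_{N,o}^2$ and $\tilde{\sigma}_{N,o}^2$ as $\tfrac1N\EE$ of integrals over $\x_{k_N-1,N-k_N}$ of $\E_\x S_o^2$ against densities built from $\tilde\rho_{N,N-k_N}$, $\prod_kG_{k,k-1}$ and $\rho_{k_N-1,0}$, the only difference being the endpoint data $\{{\rm m},\delta_0\}$ versus $\{\tilde\varrho,\varrho\}$; one then passes from $\delta_0$ to $\varrho$ and from ${\rm m}$ to $\tilde\varrho$ through an intermediate quantity, writing the difference as ${\rm I}_N+{\rm II}_N$ with the errors $\mathcal{E}_1,\mathcal{E}_2$ as in \eqref{e.defE1} and below. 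After pulling out the ratios of suprema and infima of $\tilde\rho,\rho$ exactly as there — these have bounded moments by Proposition~\ref{p.tk1} and \eqref{e.mmbdrho1} — and, as in the treatment of $K_1,K_2$ in the proof of Proposition~\ref{thm011207-22}, bounding the remaining bulk polymer measure from above by one with stationary endpoints (inserting a fresh stationary density), the surviving $\tfrac1N\E_\x S_o^2$-weighted integral is, up to bounded-moment factors, the stationary second moment of the $\eta_k$-sum over a block of length $\le N$, which is uniformly bounded by the estimate recorded in the first paragraph; Cauchy--Schwarz against $\sup_y|\mathcal{E}_1(k_N-1,y;\delta_0,\varrho)|$, whose $L^2$ norm is $\le Ce^{-\lambda k_N}$ by Proposition~\ref{p.tk1}, then yields $|\sigma_{N,o}^2-\tilde{\sigma}_{N,o}^2|\le Ce^{-\lambda k_N}\to0$.

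Combining the three displays gives $|\sigma_N^2-\tilde{\sigma}_N^2|\le\eps_N\sqrt{\sigma_N^2}+\delta_N'$ with $\eps_N,\delta_N'\to0$, where $\sup_N\tilde{\sigma}_N^2=:C_0<\infty$. Hence $\sigma_N^2\le C_0+\delta_N'+\eps_N\sqrt{\sigma_N^2}$, and solving this quadratic inequality in $\sqrt{\sigma_N^2}$ gives $\sup_N\sigma_N^2<\infty$; feeding this back into the same inequality yields $\sigma_N^2-\tilde{\sigma}_N^2\to0$, which is the claim. The essential obstacle is thus to make the replacement step of Proposition~\ref{prop010107-22} quantitative \emph{in $L^2$} rather than in $L^1$ — that is, to check that each error term can be bounded by $e^{-\lambda k_N}$ times the square root of a \emph{stationary} (hence uniformly bounded) second moment — after which the bootstrap supplies the a priori bound on $\sigma_N^2$ that is not otherwise available.
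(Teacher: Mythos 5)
Your overall strategy is genuinely different from the paper's. You propose to run the Proposition~\ref{prop010107-22} replacement argument directly with the unbounded test function $\frac1N\E_\x S_o^2$ in place of $\E_\x e^{i\theta S/\sqrt N}$, compensating for the loss of boundedness with Cauchy--Schwarz and closing the a priori problem (that $\sup_N\sigma_N^2<\infty$ is not known in advance) by a bootstrap. The paper instead expands $\big(\sum_k\eta_k\big)^2=\sum_{i,j}\eta_i\eta_j$ and partitions the index pairs into bulk--bulk ($B_1\times B_1$), boundary--boundary ($B_2\times B_2$) and cross ($B_1\times B_2$, $B_2\times B_1$) blocks with $B_1=[k_N,N-k_N]$, $B_2$ the complement. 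Each block is handled separately: bulk--bulk pairs by the Proposition~\ref{prop010107-22} argument applied to the \emph{individual} test function $\E_\x\eta_i\eta_j$ (which has uniformly bounded moments by Lemma~\ref{lm010107-22}, so no a priori bound on $\sigma_N^2$ is needed); boundary--boundary pairs directly by Lemma~\ref{lm010107-22}; and cross pairs by the $\varrho$-mixing bound of Proposition~\ref{thm011207-22} together with $E\eta_i=0$. Summing gives $|\sigma_N^2-\tilde\sigma_N^2|\le C\big(Ne^{-\lambda k_N}+N^{-1}k_N^2+N^{-1}k_N+k_Ne^{-\lambda k_N}\big)$, which goes to zero for $k_N=N^\alpha$, $\alpha\in(0,1/2)$, with no bootstrap. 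The pair decomposition thus buys you that every covariance-like object entering the estimate has moments controlled by Lemma~\ref{lm010107-22} alone.

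The gap in your argument is in the central step, $|\sigma_{N,o}^2-\tilde\sigma_{N,o}^2|\le Ce^{-\lambda k_N}$, specifically in the phrase ``up to bounded-moment factors, the stationary second moment of the $\eta_k$-sum over a block of length $\le N$, which is uniformly bounded.'' After you pull out $\sup_y|\mathcal{E}_1|$ and the $\sup/\inf$ ratios of $\tilde\rho,\rho$ and insert fresh stationary densities $\varrho_3,\varrho_4$ at the bulk endpoints, the residual $\sup\varrho_i/\inf\varrho_i$ factors are \emph{not} independent of the bulk integral $\tfrac1N\int \E_\x S_o^2\,\mu_{\rm bulk}(\varrho_3,\varrho_4)\,d\x$ --- both depend on $\varrho_3,\varrho_4$ --- so the expectation does not factor. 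Applying Cauchy--Schwarz (or H\"older) to separate them, and likewise to separate $\sup|\mathcal{E}_1|$ from the rest as you propose, forces you to control $\EE\E_{\varrho_3,\varrho_4}\big(\tfrac1N\int\E_\x S_o^2\,\mu_{\rm bulk}\,d\x\big)^2$, which by Jensen is bounded by $\tfrac1{N^2}E\,S_o^4$. Thus you need a fourth-moment inequality $E S_o^4\le CN^2$ (or $E S_o^{2q}\le CN^q$ for some $q>1$) for the stationary $\varrho$-mixing sequence $\{\eta_k\}$. The paper establishes only $E\eta_k^2<\infty$ and the variance asymptotics via Billingsley's CLT; it does \emph{not} record a Rosenthal-type moment inequality for the mixing sum, and your first paragraph supplies only the second moment. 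Such a bound is true --- one can apply the correlation bound \eqref{021307-22b} to the pair $F=\eta_i\eta_j$, $G=\eta_k\eta_l$ and to the asymmetric splits, using that $\{\eta_k\}$ has all moments by Lemma~\ref{lm010107-22} --- but it requires its own proof and is the substantive missing piece. Once supplied, your bootstrap closes as you describe.
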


\begin{proof}
The proof is similar to that of Propositions~\ref{prop010107-22} and \ref{thm011207-22}, so we only sketch the argument here.

First, for some $k_N\to\infty$, yet to be determined, we decompose $\sigma_N^2=\sum_{\ell=1}^4 I_\ell$ and $\tilde{\sigma}_N^2=\sum_{\ell=1}^4 J_\ell$, with 
\[
\begin{aligned}
&I_\ell=\frac{1}{N}\sum_{(i,j)\in A_\ell}\EE \int_{\bT^{N+1}} \big(\E_{\x}\eta_i\eta_j\big) \mu_N(\x;{\rm m},\delta_0)  d\x_{0,N},\\
&J_\ell=\frac{1}{N}\sum_{(i,j)\in A_\ell}\E_{\varrho}\E_{\tilde \varrho} \EE \int_{\bT^{N+1}} \big(\E_{\x} \eta_i\eta_j \big)\mu_N(\x;\tilde\vrho,\vrho) d\x_{0,N},
\end{aligned}
\]
with 
\[
\begin{aligned}
A_1=B_1\times B_1, \quad\quad A_2=B_2\times B_2,\\
A_3=B_1\times B_2, \quad\quad A_4=B_2\times B_1,
\end{aligned}
\]
and $B_1=[k_N,N-k_N]$ and $B_2=[1,k_N-1]\cup [N-k_N+1,N]$. In the following, we will show that $I_\ell-J_\ell\to 0$, for each $\ell=1,\ldots,4$.

\emph{The case of $\ell=1$.} By following closely the proof of Proposition~\ref{prop010107-22} and with the help of Lemma~\ref{lm010107-22}, we derive that, for each $(i,j)\in A_1$,
\[
\begin{aligned}
&\left|\EE \int  \big(\E_{\x}\eta_i\eta_j\big) \mu_N(\x;{\rm m},\delta_0)  d\x_{0,N}-\E_{\varrho}\E_{\tilde \varrho} \EE \int \big(\E_{\x} \eta_i\eta_j \big)\mu_N(\x;\tilde\vrho,\vrho) d\x_{0,N}\right| \\
&\leq Ce^{-\lambda k_N},
\end{aligned}
\]
which implies that 
\[
|I_1-J_1| \leq CN^{-1}N^2 e^{-\lambda k_N}=CNe^{-\lambda k_N}.
\]

\emph{The case of $\ell=2$.} In this case, we directly apply Lemma~\ref{lm010107-22} to derive that 
\[
|I_2-J_2|\leq CN^{-1}k_N^2.
\]

\emph{The case of $\ell=3$ and $4$.} By symmetry, we only need to consider $\ell=3$. First, for $J_3$, we have 
\[
J_3=\frac{1}{N}\sum_{(i,j)\in A_3}E (\eta_i\eta_j) 
\]
By Proposition~\ref{thm011207-22}, we know that $|E(\eta_i\eta_j)|\leq Ce^{-\lambda |i-j|}$, and this implies that 
\[
|J_3|\leq CN^{-1} \sum_{i\in B_1,j\in B_2}e^{-\lambda|i-j|} \leq CN^{-1}k_N.
\]
It remains to study $I_3$, which we rewrite as
\[
\begin{aligned}
I_3=&\frac{1}{N}\sum_{(i,j)\in A_3}\left(\EE \int \big(\E_{\x}\eta_i\eta_j\big) \mu_N(\x;{\rm m},\delta_0)  d\x_{0,N}-\prod_{k=i,j}\EE \int \big(\E_{\x}\eta_k \big)\mu_N(\x;{\rm m},\delta_0)  d\x_{0,N}\right)\\
&+\frac{1}{N}\sum_{(i,j)\in A_3}\prod_{k=i,j}\EE \int ( \E_{\x}\eta_k) \mu_N(\x;{\rm m},\delta_0)  d\x_{0,N}=:I_{31}+I_{32}.
\end{aligned}
\]
For the term $I_{31}$, by following the proof for Proposition~\ref{thm011207-22}, one can show that 
\[
|I_{31}|\leq C N^{-1}\sum_{(i,j)\in A_3} e^{-\lambda |i-j|} \leq CN^{-1}k_N.
\]
For the other term, since $(i,j)\in A_3$, we have $i\in [k_N,N-k_N]$. By following the proof for Proposition~\ref{prop010107-22}, one can show that
\[
|\EE \int (\E_{\x}\eta_i) \mu_N(\x;{\rm m},\delta_0)d\x_{0,N}-\EE \int (\E_{\x}\eta_i) \mu_N(\x;\tilde{\varrho},\varrho)d\x_{0,N}|\leq Ce^{-\lambda k_N}.
\]
We have
$$
\EE \int (\E_{\x}\eta_i)
\mu_N(\x;\tilde{\varrho},\varrho)d\x_{0,N}=E\eta_i=0.
$$ 
We can apply Lemma~\ref{lm010107-22} again and conclude
\[
|I_{32}|\leq CN^{-1}\sum_{(i,j)\in A_3}e^{-\lambda k_N} \leq Ck_Ne^{-\lambda k_N}.
\]

To summarize, we have 
\[
|\sigma_N^2-\tilde{\sigma}_N^2| \leq \sum_{\ell=1}^4 |I_\ell-J_\ell|\leq C\big(Ne^{-\lambda k_N}+N^{-1}k_N^2+N^{-1}k_N+k_Ne^{-\lambda k_N}\big).
\]
Choosing $k_N=N^{\alpha}$ for any $\alpha\in (0,1/2)$, the proof is complete.
\end{proof}

Since
\[
\tilde{\sigma}_N^2=\frac{1}{N}E(\sum_{k=1}^N \eta_k)^2\to \sigma_{\mathrm{eff}}^2, \quad\quad \mbox{ as } N\to\infty,
\]
applying Proposition~\ref{p.varclose}, we derive that,  as $N\to\infty$,
\[
\sigma_N^2\to \sigma_{\mathrm{eff}}^2.
\]
The following proposition completes the proof of the nondegeneracy of $\sigma_{\mathrm{eff}}^2$:
\begin{proposition}\label{p.sigma}
We have 
\[
\liminf_{N\to\infty}\sigma^2_{N}\geq 1.
\] As a result, $\sigma_{\mathrm{eff}}^2\geq 1$.
\end{proposition}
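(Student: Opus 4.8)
The plan is to reduce the statement to the bound $\EE\hat\E_N[w_N^2]\ge N$ for every $N$, and to obtain the latter from a Galilean change of variables combined with the rotation invariance of the spatial white noise.

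\textbf{Step 1 (reduction).} Since $|w_N-\lf w_N\rf|<1$, one has $\lf w_N\rf^2\ge w_N^2-2|w_N|-1$, hence
\[
\sigma_N^2=\frac1N\EE\hat\E_N\lf w_N\rf^2\ \ge\ \frac1N\EE\hat\E_N w_N^2-\frac2N\big(\EE\hat\E_N w_N^2\big)^{1/2}-\frac1N .
\]
We already know $\EE\hat\E_N w_N^2\le CN$ (a consequence of $\sigma_N^2\to\sigma_{\mathrm{eff}}^2<\infty$ via Proposition~\ref{p.varclose}, together with $w_N^2\le 2\lf w_N\rf^2+2$), so the last two terms are $O(N^{-1/2})$. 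Thus it suffices to prove $\EE\hat\E_N[w_N^2]\ge N$; this gives $\liminf_N\sigma_N^2\ge1$, and hence $\sigma_{\mathrm{eff}}^2\ge1$ since $\sigma_N^2\to\sigma_{\mathrm{eff}}^2$.

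\textbf{Step 2 (a Galilean identity).} For $a\in\R$ set $\theta_a\xi(t,x):=\xi(t,x+at)$, let $u_\zeta$ be the solution of \eqref{e.she} with noise $\zeta$ started from $\delta_0$, and put $Z_N[\zeta]:=\int_\R u_\zeta(N,x)\,dx$. A direct computation shows that $(t,x)\mapsto e^{ax+a^2t/2}u_\xi(t,x+at)$ solves \eqref{e.she} with noise $\theta_a\xi$ and initial datum $\delta_0$, i.e. $u_{\theta_a\xi}(t,x)=e^{ax+a^2t/2}u_\xi(t,x+at)$. Integrating $e^{ax}u_\xi(N,x)$ in $x$ and changing variables yields the pathwise identity
\[
\hat\E_N\big[e^{aw_N}\big]=\frac{\int_\R e^{ax}u_\xi(N,x)\,dx}{\int_\R u_\xi(N,x)\,dx}=e^{a^2N/2}\,\frac{Z_N[\theta_a\xi]}{Z_N[\xi]} .
\]
Because $\xi(t,\cdot)$ is white noise on the circle, it is invariant in law under the deterministic rotations $x\mapsto x+at$, so $\theta_a\xi\stackrel{d}{=}\xi$. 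Let $h(a):=\EE\big[Z_N[\theta_a\xi]/Z_N[\xi]\big]$, which is finite by H\"older using the positive moments of $Z_N[\theta_a\xi]$ (uniform in $a$ by the last relation in law) and the negative moments of $Z_N[\xi]$ from \cite[Lemma B.7]{GK21}. The measure-preserving substitution $\xi\mapsto\theta_{-a}\xi$ gives $h(a)=\EE\big[Z_N[\xi]/Z_N[\theta_{-a}\xi]\big]$, and therefore
\[
h(a)+h(-a)=\EE\Big[\frac{Z_N[\theta_{-a}\xi]}{Z_N[\xi]}+\frac{Z_N[\xi]}{Z_N[\theta_{-a}\xi]}\Big]\ \ge\ 2 .
\]

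\textbf{Step 3 (conclusion).} Combining the two displays of Step~2, for every $a\in\R$,
\[
\EE\hat\E_N e^{aw_N}+\EE\hat\E_N e^{-aw_N}=e^{a^2N/2}\big(h(a)+h(-a)\big)\ \ge\ 2e^{a^2N/2} .
\]
The left side equals $2+2\,\EE\hat\E_N\big[\cosh(aw_N)-1\big]$, so dividing by $a^2$,
\[
2\,\EE\hat\E_N\!\left[\frac{\cosh(aw_N)-1}{a^2}\right]\ \ge\ \frac{2\big(e^{a^2N/2}-1\big)}{a^2},
\]
and the right side tends to $N$ as $a\to0$. On the left, $(\cosh(aw_N)-1)/a^2\to w_N^2/2$ pointwise and, for $|a|\le1$, is dominated by $\tfrac12 w_N^2e^{|w_N|}$; the annealed expectation $\EE\hat\E_N[w_N^2e^{|w_N|}]$ is finite (Cauchy--Schwarz applied to $\int x^2e^{|x|}u_\xi(N,x)\,dx$ and $Z_N[\xi]^{-1}$, using Lemma~\ref{l.mmZ}, the Markov property and \cite[Lemma B.7]{GK21}), so dominated convergence gives the left side $\to\EE\hat\E_N w_N^2$. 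Hence $\EE\hat\E_N w_N^2\ge N$, and Step~1 finishes the proof.

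The main obstacle is Step~2: the Galilean change of variables is formal as written and must be justified for the singular equation \eqref{e.she} (e.g.\ by running the computation at the level of the mollified noise and passing to the limit, using continuity of $u$ in the noise), and one must verify the moment bounds, uniform in $a$ for $|a|\le1$, making $h(a)$ finite and justifying the dominated convergence in Step~3. Both are routine given the estimates already available here and in \cite{GK21}, but this is where the work lies.
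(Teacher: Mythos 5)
Your proposal is correct and establishes the same key estimate $\EE\hat\E_N w_N^2\ge N$ as the paper's Lemma~\ref{l.qvar}, via the same underlying Galilean shear symmetry $\xi(t,x)\stackrel{\text{law}}{=}\xi(t,x+at)$, but by a genuinely different route to the inequality. The paper first bounds $\EE\hat\E_N w_N^2$ from below by the $\PP$-average of the quenched variance $\int x^2\rho-(\int x\rho)^2$, identifies the latter with $\partial_\theta^2\log Z_\theta(N,0)|_{\theta=0}$ for the tilted SHE started from $e^{\theta x}$, and then uses the shear identity $Z_\theta(t,x)\stackrel{\text{law}}{=}Z_0(t,x+\theta t)e^{\theta x+\theta^2 t/2}$ and stationarity of $Z_0(N,\cdot)$ to get $\EE\log Z_\theta(N,0)=\tfrac12\theta^2 N+\text{const}$, whose second $\theta$-derivative is exactly $N$. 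You instead stay at the level of the annealed moment generating function $\EE\hat\E_N e^{aw_N}=e^{a^2N/2}\EE[Z_N[\theta_a\xi]/Z_N[\xi]]$ and extract the lower bound from the elementary AM--GM fact $X+X^{-1}\ge 2$ applied to the ratio of partition functions, then let $a\to0$ with dominated convergence. Your argument avoids having to justify twice-differentiating $\log Z_\theta$ inside the expectation, but pays for it with an $a\to0$ limit that requires the exponential moment bound $\EE\hat\E_N[w_N^2e^{|w_N|}]<\infty$; the paper's approach avoids that moment bound but leans on a differentiation-under-the-integral step that it only sketches. Both approaches need the same formal Galilean identity to be justified at the level of the singular SHE (mollification and passage to the limit), which you correctly flag. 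Your Step~1 reduction is slightly more roundabout than the paper's (which simply applies the $L^2$ triangle inequality to $w_N=\lf w_N\rf+(w_N-\lf w_N\rf)$ and needs no a priori upper bound on $\EE\hat\E_N w_N^2$), but it is correct.
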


\begin{proof}
By definition, we have 
\[
\sigma_N^2=\frac{1}{N}\EE\hat{\E}_N \lf w_N\rf^2.
\]
Since  $|w_N|\leq |\lfloor w_N\rfloor|+1$, we have via a triangle inequality that  
\[
\sqrt{\frac{1}{N} \EE\hat{\E}_N w_N^2} \leq \sqrt{\frac{1}{N}\EE\hat{\E}_N\lfloor w_N\rfloor^2}+\sqrt{\frac{1}{N}}.
\]
Sending $N\to\infty$ and applying Lemma~\ref{l.qvar} below, the proof is complete.
\end{proof}
%

\begin{lemma}\label{l.qvar}
For any $N\in \Z_+$, we have 
\begin{equation}
\EE\hat{\E}_Nw_N^2\geq N.
\end{equation}
\end{lemma}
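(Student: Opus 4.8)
$\EE\hat{\E}_N w_N^2 \geq N$ for all positive integers $N$.

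The plan is to recast the statement in terms of the stochastic heat equation and then isolate a correlation. Let $u$ be the solution of \eqref{e.she} started from $\delta_0$ and $Z_{N,s}$ its propagator; set $q_N:=\int_\R x^2 u(N,x)\,dx$ and $Z_N:=\int_\R u(N,x)\,dx$, so that $\EE\hat\E_N w_N^2=\EE[q_N/Z_N]$. Since $\EE u(N,\cdot)$ solves the heat equation with data $\delta_0$, it is the centered Gaussian density of variance $N$; hence $\EE q_N=N$ and $\EE Z_N=1$. The claim is therefore exactly $\EE[q_N/Z_N]\ge\EE[q_N]$, i.e. a positive correlation between $q_N$ and $Z_N^{-1}$, and the whole content of the lemma is to establish it (it is not automatic — in the independent case it would follow from Jensen, so the point is the sign of the correlation, which must come from the periodic structure).

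To make the correlation explicit I would use Gaussian integration by parts for the white noise $\xi$. Testing the mild form of \eqref{e.she} against $x^2\,dx$ and against $dx$ (using $\int_\R x^2\,p_{t-s}(x-y)\,dx=(t-s)+y^2$ for the heat kernel $p_r$) gives $q_N-NZ_N=\beta\int_0^N\!\int_\R(y^2-s)\,u(s,y)\,\xi(ds,dy)$. Combined with the standard identity $D_{s,y}u(t,x)=\beta\,u(s,y)Z_{t,s}(x,y)$ for the Malliavin derivative and the duality $\EE[F\,\delta(v)]=\EE[\langle DF,v\rangle]$ applied to $F=Z_N^{-1}$ and the adapted integrand $v(s,y)=\beta(y^2-s)u(s,y)$, this yields
\[
\EE[q_N/Z_N]-N=\beta^2\,\EE\!\left[\frac{1}{Z_N^{2}}\int_0^N\!\!\int_\R(s-y^2)\,u(s,y)^2\,\phi_N(s,y)\,dy\,ds\right],
\]
where $\phi_N(s,y):=\int_\R Z_{N,s}(z,y)\,dz$ is the backward (free endpoint) partition function. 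The same identity can be reached without Malliavin calculus by writing the Fokker--Planck equation for the quenched law of $w_t$ under $\hat\Pb_N$, whose density is $u(t,\cdot)\phi_N(t,\cdot)/Z_N$ with drift $\partial_x\log\phi_N(t,\cdot)$, and computing $\frac{d}{dt}\int x^2\rho_t\,dx=1+2\int x\,\partial_x\log\phi_N(t,x)\,\rho_t\,dx$; this route, however, requires care with the low spatial regularity of $\phi_N$.

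It remains to show the bracketed quantity is nonnegative, and this is the \emph{main obstacle}. The structural inputs I would exploit are: (i) by \eqref{Zts}--\eqref{Gts}, $\phi_N(s,\cdot)=\int_0^1 G_{N,s}(z,\cdot)\,dz$ is $1$-periodic; (ii) $\phi_N(s,\cdot)$ depends only on $\xi|_{[s,N]}$ and $u(s,\cdot)$ only on $\xi|_{[0,s]}$, so they are independent, while $u(s,\cdot)\phi_N(s,\cdot)/Z_N$ is the quenched density of $w_s$; (iii) the covariance of $\phi_N(s,\cdot)$ is nonnegative, $1$-periodic, even, and symmetric--decreasing on $[0,\tfrac12]$. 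For (iii) one has, by the Feynman--Kac formula for the second moment of the stochastic heat equation, $\EE[\phi_N(s,x)\phi_N(s,x')]=\E^{x-x'}\!\big[\exp(\beta^2 L^0_{[0,N-s]})\big]$, with $L^0$ the local time at $0$ of a rate--$2$ Brownian motion on the torus started from $x-x'$; since the hitting time of $0$ is stochastically increasing in the distance of the starting point to $\Z$, this expectation is a decreasing function of that distance. Freezing $\xi|_{[0,s]}$, folding the $y$-integral onto $[0,1]$ by periodicity, and expanding $Z_N^{-2}$ should reduce the sign to a Chebyshev-type correlation inequality between $\phi_N(s,\cdot)$ and the periodized weighted moments of $u(s,\cdot)$. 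The genuine difficulty is that $Z_N^{-2}$ couples the two time blocks, so $\EE$ cannot simply be passed through; the positivity rests on combining the monotone covariance in (iii) with the elementary fact that, at the Gaussian level, the second moment of $\rho^2$ is smaller than that of $\rho$.
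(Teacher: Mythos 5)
There is a genuine gap in your proposal. You correctly reduce the lemma to the identity
\[
\EE\!\left[\frac{q_N}{Z_N}\right]-N=\beta^2\,\EE\!\left[\frac{1}{Z_N^{2}}\int_0^N\!\!\int_\R(s-y^2)\,u(s,y)^2\,\phi_N(s,y)\,dy\,ds\right],
\]
but you then acknowledge that the nonnegativity of the right-hand side is ``the main obstacle'' and that ``$Z_N^{-2}$ couples the two time blocks, so $\EE$ cannot simply be passed through.'' That is exactly where the argument stops: the integrand $(s-y^2)$ changes sign, the periodicity of $\phi_N$ and the Feynman--Kac local-time representation of its covariance are not by themselves enough to control that sign, and the invoked ``Chebyshev-type correlation inequality'' is never formulated or proved. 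As written, the proposal does not establish $\EE\hat\E_N w_N^2\geq N$; it replaces it with a different, unresolved positivity statement of comparable difficulty.

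The paper avoids this entirely by a decomposition that makes the correlation inequality unnecessary. Writing $\rho(N,\cdot)$ for the quenched endpoint density, one simply drops the nonnegative quenched mean squared:
\[
\EE\hat\E_N w_N^2=\EE\int x^2\rho(N,x)\,dx\ \geq\ \EE\Big[\int x^2\rho(N,x)\,dx-\Big(\int x\rho(N,x)\,dx\Big)^{2}\Big],
\]
so it suffices to compute the annealed average of the quenched \emph{variance}, and this equals $N$ exactly. The computation uses shear invariance of the white noise: if $Z_\theta$ solves the SHE with initial data $e^{\theta x}$, the quenched variance has the law of $\partial_\theta^2\log Z_\theta(N,0)\big|_{\theta=0}$, and $\{\xi(t,x)\}\stackrel{\text{law}}{=}\{\xi(t,x+\theta t)\}$ gives $\EE\log Z_\theta(N,0)=\tfrac12\theta^2 N+\EE\log Z_0(N,0)$, hence $\EE\,\partial_\theta^2\log Z_\theta(N,0)\big|_{\theta=0}=N$. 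Note the contrast with your framing: you try to show $\EE[q_N/Z_N]\geq\EE q_N$ directly, whereas the paper shows the stronger-looking but actually trivial-to-use identity $\EE[q_N/Z_N]-\EE[(m_N/Z_N)^2]=N$ with $m_N=\int x\,u(N,x)\,dx$, from which the lemma follows because $\EE[(m_N/Z_N)^2]\geq 0$. If you want to salvage your route, the cleanest fix is to make this same move before attempting any Gaussian integration by parts; the shear argument then does all the remaining work and no sign analysis of $(s-y^2)$ is needed.
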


\begin{proof}
Recall that $\rho(N,\cdot)$ is the density of $w_N$ under the quenched polymer measure $\hat{\Pb}_N$, we have 
\[
\begin{aligned}
\EE\hat{\E}_N w_N^2&=\EE \int x^2\rho(N,x)dx \\
&\geq \EE \Big[\int x^2\rho(N,x)dx-(\int x\rho(N,x)dx)^2\Big].
\end{aligned}
\]
Note that the last expression is just the average of the quenched variance. It remains to show 
\begin{equation}\label{e.varN}
\EE [\int x^2\rho(N,x)dx-(\int x\rho(N,x)dx)^2]=N,
\end{equation}
which is a standard folklore for the directed polymer when the random environment is statistically invariant under shear transformations. We sketch the argument below.

For any $\theta\in\R$, consider the solution to SHE 
\begin{equation}\label{e.she1}
\begin{aligned}
&\partial_tZ_\theta(t,x)=\frac12\Delta Z_\theta(t,x)+\beta \xi(t,x)Z_\theta(t,x), \quad\quad t>0,\\
&Z_\theta(0,x)=e^{\theta x},
\end{aligned}
\end{equation}
 then we know that, for fixed $N$, 
\[
\int x^2\rho(N,x)dx-(\int x\rho(N,x)dx)^2\,\stackrel{\text{law}}{=}\,\partial_\theta^2 \log Z_\theta (N,0)\big|_{\theta=0}.
\]
\blue{For a proof of this fact, we refer to e.g. \cite[Eq. (2.15)]{gutams}.}
Since $\xi$ is a Gaussian process that is white in time and stationary in space, we have 
\begin{equation}\label{e.shear}
\{\xi(t,x)\}_{t,x}\stackrel{\text{law}}{=}\{\xi(t,x+\theta t)\}_{t,x},
\end{equation}
and this implies 
\begin{equation} 
\{Z_\theta(t,x)\}_{t,x}\stackrel{\text{law}}{=}\{Z_0(t,x+\theta t)e^{\theta x+\frac12\theta^2 t}\}_{t,x},
\end{equation}
which comes from the fact that $Z_0(t,x+\theta t)e^{\theta x+\frac12\theta^2 t}$ solves \eqref{e.she1} with $\{\xi(t,x)\}$ replaced by $\{\xi(t,x+\theta t)\}$.
Therefore, we have
\[
\begin{aligned}
\EE \log Z_\theta(N,0)&=\frac12\theta^2 N+\EE \log Z_0(N,\theta N)\\
&=\frac12\theta^2 N+\EE \log Z_0(N,0),
\end{aligned}
\]
where the last step comes from the fact that $Z_0(N,\cdot)$ is a stationary random field. Therefore, we have 
\[
\EE \partial_\theta^2 \log Z_\theta (N,0)\big|_{\theta=0}=N.
\]
 The proof is complete.
\end{proof}

\section{Further discussion}
\label{s.dis}

We list two  problems here.

\subsubsection*{Quenched behavior}
Theorem~\ref{t.mainth} concerns the behavior of $w_T$ under the annealed measure $\PP\otimes \hat{\Pb}_T$, and our approach does not give information on the quenched behavior.  For the problem on the whole line with no periodic structure,  the annealed and quenched behaviors are quite different, due to the localization phenomenon. It would be interesting to study the quenched asymptotics of $w_T$ in our setting.

\subsubsection*{Relation between two diffusion constants} Recall that $Z_T$ is the partition function formally defined in \eqref{e.defZ}. It was shown in \cite{GK21}, under the same assumption as here, that the free energy $\log Z_T$ satisfies a central limit theorem: there exists $\gamma,\Sigma>0$ such that under $\PP$, 
\[
\frac{\log Z_T+\gamma T}{\sqrt{T}}\Rightarrow N(0,\Sigma), \quad\quad \mbox{ as } T\to\infty.
\]
Different expressions of $\Sigma$ were derived, see \cite[Eq. (5.58)]{GK21} which involves the solution to an abstract cell problem and \cite[Eq. (2.10)]{dgk} which takes the form of an average of a Brownian bridge functional. A surprising relation between $\Sigma$ and $\sigma_{\mathrm{eff}}^2$ was derived by Brunet through the replica method, see  \cite[Eq. (20)]{brunet}. It is unclear at all why they should be related, and we believe it is an important problem to unravel  the connection here.

\appendix

\section{Proof of Lemma \ref{l.mmZ}}
\label{appA}

We start with the following.
\begin{lemma}
\label{lmA1}
For   each $t>s$ and $y\in\R$, the process  $\{Z_{t,s}(x,y)/q_{t-s}(x-y)\}_{x\in\R}$ is stationary.
\end{lemma}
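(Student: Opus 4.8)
The plan is to pass through the Wiener–chaos (Feynman–Kac) representation of the propagator and to realize the spatial shift $x\mapsto x+h$ as a measure-preserving ``shearing'' of the noise. Write $q$ for the standard heat kernel, $q_r(z)=(2\pi r)^{-1/2}e^{-z^2/(2r)}$, and recall the chaos expansion
\[
Z_{t,s}(x,y)=\sum_{n\ge0}\beta^n\int_{s<r_1<\cdots<r_n<t}\int_{\R^n}
q_{r_1-s}(z_1-y)q_{r_2-r_1}(z_2-z_1)\cdots q_{t-r_n}(x-z_n)\,\xi(r_1,z_1)\cdots\xi(r_n,z_n)\,dz\,dr ,
\]
interpreted as a sum of iterated It\^o--Walsh integrals converging in $L^2(\PP)$ (see e.g. \cite{davar}), the $n=0$ term being $q_{t-s}(x-y)$. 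By the Chapman--Kolmogorov identity the $n$-th coefficient divided by $q_{t-s}(x-y)$ is exactly the joint density at $(z_1,\dots,z_n)$, evaluated at times $r_1<\cdots<r_n$, of a Brownian bridge from $(s,y)$ to $(t,x)$; call it $p^{\mathrm{br}}_{y\to x}(r_\bullet;z_\bullet)$. Thus $Z_{t,s}(x,y)/q_{t-s}(x-y)$ is an explicit multilinear functional of $\xi\big|_{[s,t]\times\R}$ with bridge-density kernels.

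Next I would use the elementary fact that a Brownian bridge from $y$ to $x+h$ over $[s,t]$ equals, after the obvious coupling, the bridge from $y$ to $x$ plus the deterministic affine function $r\mapsto\frac{r-s}{t-s}h$ (adding a deterministic drift does not change the covariance). Hence $p^{\mathrm{br}}_{y\to x+h}(r_\bullet;z_\bullet)=p^{\mathrm{br}}_{y\to x}\!\big(r_\bullet;\,z_i-\tfrac{r_i-s}{t-s}h\big)$. Plugging this into the chaos series for $Z_{t,s}(x+h,y)/q_{t-s}(x+h-y)$ and changing variables $z_i\mapsto z_i+\tfrac{r_i-s}{t-s}h$ in each $n$-fold integral gives
\[
\frac{Z_{t,s}(x+h,y)}{q_{t-s}(x+h-y)}=\sum_{n\ge0}\beta^n\int\!\!\int p^{\mathrm{br}}_{y\to x}(r_\bullet;z_\bullet)\,\xi^{h}(r_1,z_1)\cdots\xi^{h}(r_n,z_n)\,dz\,dr ,
\]
where $\xi^{h}(r,z):=\xi\big(r,\,z+\tfrac{r-s}{t-s}h\big)$ for $r\in[s,t]$. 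In other words, $\{Z_{t,s}(x+h,y)/q_{t-s}(x+h-y)\}_{x\in\R}$ is the \emph{same} measurable functional of the noise on $[s,t]\times\R$ as $\{Z_{t,s}(x,y)/q_{t-s}(x-y)\}_{x\in\R}$, only evaluated at $\xi^{h}$ instead of $\xi$.

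Finally I would invoke the invariance in law $\xi^{h}\stackrel{\text{law}}{=}\xi$. Since $\xi$ is the $1$-periodic extension of a space--time white noise on $\R_+\times\bT$, its restriction to $[s,t]\times\bT$ is a white noise for $dr\otimes dx$ and is therefore invariant under any measure-preserving transformation of the index space; the shear $(r,z)\mapsto(r,\,z+\tfrac{r-s}{t-s}h)$ descends to such a transformation on $[s,t]\times\bT$ (a rotation on each time fibre), and lifting back to the periodic extension gives $\xi^{h}\stackrel{\text{law}}{=}\xi$ jointly in all variables. Combining with the previous step yields, for every $h\in\R$,
\[
\{Z_{t,s}(x+h,y)/q_{t-s}(x+h-y)\}_{x\in\R}\ \stackrel{\text{law}}{=}\ \{Z_{t,s}(x,y)/q_{t-s}(x-y)\}_{x\in\R} ,
\]
which is the asserted stationarity. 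The only genuinely technical point is the bookkeeping making the termwise change of variables and the identification of the resulting series with a functional of $\xi^{h}$ rigorous: one needs the $L^2$-convergence of the chaos expansion uniformly enough to justify Fubini and the substitution, and the fact that shifting by a deterministic time-dependent function acts on white noise exactly as the claimed measure-preserving shear. Both are standard — the first from the Gaussian moment bounds for the SHE propagator (as in \cite{davar}, or those underlying \cite{GK21}), the second elementary. A PDE-based alternative would track $\hat u(r,x):=Z_{r,s}(x+\tfrac{r-s}{t-s}h,y)$, which solves a stochastic heat equation driven by $\xi^{h}$ with an extra transport term $\tfrac{h}{t-s}\partial_x\hat u$; this works too, but the chaos argument avoids handling the It\^o--Walsh product against a time-shifted noise and is cleaner to present.
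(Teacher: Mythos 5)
Your argument is correct, and it implements the same underlying idea as the paper's proof (the combination of the Brownian-bridge shift $B^{y,x+h}_{s,t}=B^{y,x}_{s,t}+\tfrac{\cdot-s}{t-s}h$ with the shear-invariance of white-in-time noise), but the route is genuinely different. The paper first mollifies $\xi$ to a noise $\xi_R$ that is white in time but smooth in space, writes the propagator by the Feynman--Kac formula
\[
Z^{(R)}_{t,s}(x,y)=q_{t-s}(x-y)\,\E\exp\Big\{\beta\int_s^t\xi_R(\sigma,B^{y,x}_{s,t}(\sigma))\,d\sigma-\tfrac12\beta^2R(0)(t-s)\Big\},
\]
reads off stationarity of $Z^{(R)}_{t,s}(x,y)/q_{t-s}(x-y)$ (the same bridge-shear/shear-invariance reasoning you make explicit, left implicit in ``it is clear from the above formula''), and then passes to the white-noise limit $R_n\to\delta$. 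You instead work directly at the white-noise level via the Wiener chaos expansion, identify the $n$-th chaos kernel divided by $q_{t-s}(x-y)$ with the $n$-point Brownian-bridge transition density, perform the bridge shear as a termwise change of variables, and recognize the result as the same functional of the sheared noise $\xi^h\stackrel{\mathrm{law}}{=}\xi$. What the paper's route buys is brevity and a probabilistically transparent Feynman--Kac picture, at the price of an approximation step to pass from smooth to white noise; what your chaos route buys is that it stays entirely at the white-noise level and avoids the mollification/limit-passage, at the price of a bit more bookkeeping with the series (the $L^2$ convergence and Fubini steps you flag, which are indeed standard). Your final remark about the PDE/transport alternative is also a fair assessment of why the chaos version is cleaner to present rigorously.
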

\proof 
The argument is  rather standard, so we only sketch the proof. 
We consider first the case when $\xi_R(t,x)$ is a $1$-periodic Gaussian  noise
that is white in time and colored in space, with the covariance
function $R(\cdot)\in C^\infty(\bT)$. Recall that $q_t(x)=(2\pi
t)^{-1/2}\exp(-\frac{x^2}{2t})$ denotes the standard heat kernel.  The propagator of equation
\eqref{e.she}, corresponding to this noise, shall be
denoted by $Z_{t,s}^{(R)}(x,y)$ and is given by the formula
\begin{align*}
Z_{t,s}^{(R)}(x,y)= q_{t-s}(x-y) \E \left[\exp\left\{\beta\int_s^t\xi_R( \sigma,B^{y,x}_{s,t}(\sigma)) d \sigma
    -\frac12\beta^2 R(0)(t-s)\right\} \right],
\end{align*}
where $(B^{y,x}_{s,t}(\sigma))_{s\le \sigma\le t}$ is the Brownian bridge between $(s,y)$
and $(t,x)$.  It is clear from the above formula that
$\{Z_{t,s}^{(R)}(x,y)/q_{t-s}(x-y)\}_{x\in\R}$ is stationary, using the fact that $\xi_R$ also satisfies the relation \eqref{e.shear}.  The
conclusion can be extended to the case of the   Gaussian  space-time
white noise by approximation of $\delta(x-y)$ by a   sequence
of $R_n(\cdot)\in C^\infty(\bT)$ as $n\to\infty$. 
\qed

\subsection*{The end of the proof of  Lemma \ref{l.mmZ}}
Using Lemma \ref{lmA1} and the definition of the propagator of the SHE
on a torus, see \eqref{Gts}, we can write
\[
\EE Z_{t,0}(x,0)^p=\big(\frac{q_t(x)}{q_t(0)}\big)^p\EE
Z_{t,0}(0,0)^p\le \big(\frac{q_t(x)}{q_t(0)}\big)^p\EE G_{t,0}(0,0)^p.
\]
Applying \cite[Lemma B.1]{GK21}, we complete the proof.

\end{document}